\documentclass[12pt,a4paper]{article}
\oddsidemargin 0in \evensidemargin 1.5in \textwidth 6.3in

\usepackage{amssymb,amsmath,amsthm}
\usepackage[english]{babel}
\usepackage{t1enc}
\usepackage[latin2]{inputenc}
\usepackage{epsfig}
\usepackage{subfig}
\usepackage{epstopdf}
\usepackage{xcolor}
\usepackage{bookmark}


\newtheorem{thm}{Theorem}
\newtheorem{cor}[thm]{Corollary}
\newtheorem{defi}[thm]{Definition}
\newtheorem{lem}[thm]{Lemma}

\newtheorem{claim}[thm]{Claim}

\newtheorem{obs}[thm]{Observation}

\theoremstyle{remark}

\usepackage{indentfirst}
\frenchspacing

\hfuzz2pt

\def\HH{\mathcal {H}}
\def\RR{\mathcal {R}}
\def\cH{\mathcal {H}}
\def\CC{\mathcal {C}}
\def\FF{\mathcal {F}}
\def\AA{\mathcal {A}}
\def\TT{\mathcal {T}}

\def\BB{\mathcal {B}}
\def\cD{\mathcal {D}}
\def\red1{\color{red}1\color{black}}
\def\blue1{\color{blue}1\color{black}}

\hyphenation{pseudo-half-plane pseudo-hemi-sphere hyper-graph}

\makeatletter
\def\blfootnote{\gdef\@thefnmark{}\@footnotetext}
\makeatother

\begin{document}

\title{Discrete Helly-type theorems for pseudohalfplanes}

\author{Bal\'azs Keszegh\thanks{Alfréd Rényi Institute of Mathematics and MTA-ELTE Lendület Combinatorial Geometry Research Group, Budapest, Hungary. Research supported by the Lend\"ulet program of the Hungarian Academy of Sciences (MTA), under the grant LP2017-19/2017 and by the National Research, Development and Innovation Office -- NKFIH under the grant K 132696.}
}

\maketitle


\begin{abstract}
We prove discrete Helly-type theorems for pseudohalfplanes, which extend recent results of Jensen, Joshi and Ray about halfplanes. Among others we show that given a family of pseudohalfplanes $\HH$ and a set of points $P$, if every triple of pseudohalfplanes has a common point in $P$ then there exists a set of at most two points that hits every pseudohalfplane of $\HH$. We also prove that if every triple of points of $P$ is contained in a pseudohalfplane of $\HH$ then there are two pseudohalfplanes of $\HH$ that cover all points of $P$.

To prove our results we regard pseudohalfplane hypergraphs, define their extremal vertices and show that these behave in many ways as points on the boundary of the convex hull of a set of points. Our methods are purely combinatorial.

In addition we determine the maximal possible chromatic number of the regarded hypergraph families.
\end{abstract}

\section{Introduction}\label{sec:intro}

Given a (finite) point set $P$ and a family of regions $\RR$ (e.g., the family of all halfplanes) in the plane (or in higher dimensions), let $\HH$ be the hypergraph with vertex set $P$ and for each region of $\RR$ having a hyperedge containing exactly the same points of $P$ as this region. There are many interesting problems that can be phrased as a problem about hypergraphs defined this way, which are usually referred to as \emph{geometric hypergraphs}. This topic has a wide literature, researchers considered problems where $\RR$ is a family of halfplanes, axis-parallel rectangles, translates or homothets of disks, squares, convex polygons, pseudo-disks and so on. There are many results and open problems about the maximum number of hyperedges of such a hypergraph, coloring questions and other properties. For a survey of some of the most recent results see the introduction of the paper of Ackerman, Keszegh and Pálvölgyi \cite{abab} and of the paper of Damásdi and Pálvölgyi \cite{dp}, for an up-to-date database of such results with references see the webpage \cite{cogezoo}.

One of the most basic families is the family of halfplanes, about which already many problems are non-trivial. Among others one such problem was considered by Smorodinsky and Yuditsky \cite{smorodinsky-yuditsky} where they prove that the vertices of every hypergraph defined by halfplanes on a set of points (i.e., $P$ is a finite set of points and $\RR$ is the family of all halfplanes) can be $k$-colored such that every hyperedge of size at least $2k+1$ contains all colors. Keszegh and Pálvölgyi \cite{abafree} considered generalizing this result by replacing halfplanes with the family of translates of an unbounded convex region (e.g., an upwards parabola). It turned out that this is true even when halfplanes are replaced by \emph{pseudohalfplanes}. The main tool of proving this was an equivalent combinatorial definition of so called \emph{pseudohalfplane hypergraphs}, hypergraphs that can be defined on points with respect to pseudohalfplanes.\footnote{The exact definitions of pseudohalfplanes and pseudohalfplane hypergraphs are postponed to Section \ref{sec:pshpdef}.} This formulation had the promise that many other statements about halfplane hypergraphs can be generalized to pseudohalfplane hypergraphs in the future. While this combinatorial formulation has the disadvantage of being less visual and thus somehow less intuitive than the geometric setting, it has many advantages, among others covering a much wider range of hypergraphs, also, being purely combinatorial might have algorithmic applications as well. One recent application is a similar polychromatic coloring result of Damásdi and Pálvölgyi \cite{dp} about disks all containing the origin where after observing that in every quadrant of the plane the disks form a family of pseudohalfplanes they can apply the results from \cite{abafree}. 

In \cite{abafree} the equivalent of the convex hull vertices in the plane (more precisely, the points on the boundary of the convex hull) was defined for pseudohalfplane hypergraphs and called \emph{unskippable vertices} and this made it possible to generalize the proof idea of \cite{smorodinsky-yuditsky} from halfplanes to pseudohalfplane hypergraphs. To ease intuition, we call unskippable vertices as \emph{extremal vertices} from here on. Exact definitions of these notions are postponed.

Recently Jensen, Joshi and Ray \cite{jjr} proved discrete Helly-type theorems which can be formulated in terms of halfplane hypergraphs, their results are detailed in Section \ref{sec:hp}. In this paper we generalize their results to pseudohalfplane hypergraphs, in addition we also prove one missing variant for which even the halfplane counterpart was not considered yet. Again we make use of extremal vertices defined in \cite{abafree}, but we need to prove many new properties of extremal vertices which show that extremal vertices behave in many ways as convex hull vertices in the plane (more precisely, as the points on the boundary of the convex hull). We believe that these properties will be useful also for future research on pseudohalfplane hypergraphs. We also consider these problems for pseudohemisphere hypergraphs, a natural hypergraph family containing the family of pseudohalfplane hypergraphs.

We consider the following two types of problems: in a \emph{primal discrete Helly theorem of type $k\rightarrow l$} let $P$ be a set of $n$ points (resp. vertex set) and $\FF$ be a family of regions (resp. hypergraph). If every $k$-tuple of regions (resp. hyperedges) in $\FF$ intersects at a point (resp. vertex) in $P$, then there exists a set of $l$ points (resp. vertices) in $P$ that intersects each $F\in \FF$. In a \emph{dual discrete Helly theorem of type $k\rightarrow l$} let $P$ be a finite set of $n$ points (resp. vertices) and $\FF$ be a family of regions (resp. hypergraph). If every subset of $k$ points in $P$ belongs to some region (resp. hyperedge) $F\in \FF$ then there exist $l$ regions (resp. hyperedges) in $\FF$ whose union covers $P$.

In Table \ref{tab:summary} we summarize our results. For all our results we show that they are optimal except for the ones about pseudohemispheres.

\begin{table}[]
	\centering
	\begin{tabular}{|c|c|c|c|c|c|}
		 \hline
		 \multicolumn{2}{|c|}{halfplane}                                                     & ABA-free                                & \multicolumn{2}{|c|}{pseudohalfplane}    
		& pseudohemisphere 
		\\        primal                         &  dual                            &      primal/dual                     &    primal                     &   dual                 
		& primal/dual
		\\
		\hline
		$3\rightarrow 2$\cite{jjr} & $3\rightarrow 2$\cite{jjr} & $2\rightarrow 2$ & $3\rightarrow 2$ & $3\rightarrow 2$ 
		& $4\rightarrow 2$ 
		\\
		(Thm \ref{thm:primalhp}) & 		(Thm \ref{thm:dualhp1}) & (Thm \ref{thm:primalaba}, Cor \ref{cor:dualaba})& (Thm \ref{thm:primalpshp32}) & (Thm \ref{thm:dualpseudohp32}) 
		& (Thm \ref{thm:primalpseudohemi}, Thm \ref{thm:dualpseudohsp24}) 		
		\\$2\rightarrow 3$& $2\rightarrow 3$\cite{jjr}      && $2\rightarrow 3$                                        & $2\rightarrow 3$ 
		&      
		\\
		(Thm \ref{thm:primalpshp}) & 		(Thm \ref{thm:dualhp2}) &  & (Thm \ref{thm:primalpshp}) & (Thm \ref{thm:dualpseudohp23}) 
		&\\ 	
		\hline	                                         
	\end{tabular}
	\caption{Summary of the considered Helly-type results}
	\label{tab:summary}
\end{table}

In order to show that our primal and dual results about pseudohalfplanes could not be handled together we show that the chromatic number differentiates the primal and dual setting. In order to do that we prove that the maximal possible chromatic number of pseudohalfplane hypergraphs is $4$ while the maximal possible chromatic number of duals of pseudohalfplane hypergraphs is $3$.

As mentioned, such discrete Helly-type problems were considered earlier by Jensen et al. \cite{jjr} for halfplanes. We are aware of only two further papers considering such problems. First, Halman \cite{halman} among others proved discrete Helly-type results about axis-parallel boxes. Second, while it is easy to see that in general a discrete Helly-type theorem for convex sets is not true (see the example at the beginning of Section \ref{sec:hp}), yet an old result of Doignon \cite{doignon} states that given a finite family $\cal H$ of convex sets in $R^d$, if every $2^d$ or fewer members of $\cal H$ have a common point with integer coordinates, then there is a point with integer coordinates common to all members of $\cal H$.

The paper is structured as follows. First in Section \ref{sec:pshpdef} we define pseudohalfplane hypergraphs, the objects we study. In Section \ref{sec:hp} we give an account of the discrete Helly-type results of Jensen et al. \cite{jjr} which we generalize to pseudohalfplanes in Section \ref{sec:pshp}, these results are proved in Section \ref{sec:proofpshp} using properties of extremal vertices proved in Section \ref{sec:extremal}. In Section \ref{sec:vs} we discuss why and how much our setting is more general than the usual geometric setting of halfplanes. In Section \ref{sec:chromatic} we state and prove our results about proper coloring pseudohalfplane and dual pseudohalfplane hypergraphs. Finally, in Section \ref{sec:discussion} we give some directions for further research.

\subsection{Pseudohalfplanes and pseudohalfplane hypergraphs}\label{sec:pshpdef}

\textbf{Pseudohalfplane hypergraphs.} The definition of \emph{pseudohalfplane hypergraphs} introduced in \cite{abafree} is based on the definition of ABA-free hypergraphs and is as follows.

\begin{defi}\label{def:ABA}
	A hypergraph $\mathcal H$ with an ordered vertex set is called {\em ABA-free} if $\mathcal H$ does not contain two hyperedges $A$ and $B$ for which there are three vertices $x<y<z$ such that $x,z\in A\setminus B$ and $y\in B\setminus A$.\footnote{We imagine the vertices on a horizontal line, and thus if $x<y$ then we may say that $x$ is to the left from $y$ and so on.}
\end{defi}

\begin{defi}\label{def:pseudo}
	A hypergraph $\HH$ on an ordered set of vertices $V$ is called a  {\em pseudo\-halfplane hypergraph} if there exists an ABA-free hypergraph $\FF$ on $V$ such that $\HH\subset\FF\cup \bar{\FF}$.\footnote{$\bar{\FF}$ denotes the family of the complements of the hyperedges of $\FF$. It was shown in \cite{abafree} that  $\bar\FF$ is also ABA-free if $\FF$ is ABA-free.}
\end{defi}

\textbf{Pseudolines.}
A {\em loose pseudoline arrangement} is a finite collection of simple curves in the plane such that each curve cuts the plane into two unbounded components (i.e., both endpoints of each curve are at infinity) and any pair of curves is either disjoint or intersects once and in the intersection point the two curves cross. 
A \emph{pseudoline arrangement} is a loose pseudoline arrangement in which no two curves are disjoint (and so they cross once).\footnote{Pseudoline arragements are usually defined in the projective plane, as a collection of simple closed curves whose removal does not disconnect the projective plane and for which every pair of the curves intersects no more than once (hence they intersect exactly once where they cross). However, in the literature sometimes pseudoline arrangements are defined as we now defined loose pseudoline arrangements. We differentiate between these two notions to avoid confusion and also to make clear that most of our results apply to the more general case of loose pseudoline arrangements.}
 A (loose) arrangement of pseudolines is \emph{simple} if no three pseudolines meet at a point. Wlog. we can assume that the pseudolines are $x$-monotone bi-infinite curves (see, e.g. \cite{abafree}), such arrangements are sometimes called Euclidean or \emph{graphic} pseudoline arrangements. For an introduction into pseudoline arrangements see Chapter 5 of \cite{handbook} by Felsner and Goodman.

\textbf{Pseudohalfplanes.}
Given a pseudoline arrangement, a \emph{pseudohalfplane family} is the subfamily of the above defined components (one on each side of each pseudoline). A pseudohalfplane family is simple (resp. loose) if the boundaries form a simple (resp. loose) pseudoline arrangement.
A pseudohalfplane family is upwards if we just take components that are above the respective pseudoline (here we use that the pseudolines are assumed to be $x$-monotone).


In \cite{abafree} it is shown that given a loose family $\FF$ of pseudohalfplanes in the plane and a set of points $P$ then the hypergraph whose hyperedges are the subsets that we get by intersecting regions of $\FF$ with $P$ is a pseudohalfplane hypergraph, and conversely, every pseudohalfplane hypergraph can be realized this way with a (simple and not loose) family of pseudohalfplanes.\footnote{In fact they prove that we can realize them with simple loose pseudoline arrangements but their argument can be easily modified to have a realization with a simple and not loose pseudoline arrangement as well.} If $\FF$ is a family of upwards pseudohalfplanes then we get the ABA-free hypergraphs. Thus, all our results about pseudohalfplane hypergraphs implies the respective result about (loose and not loose) families of pseudohalfplanes where we replace vertices with points and hyperedges with pseudohalfplanes. For the same reason, slightly abusing our notation, we may refer to the hyperedges of a pseudohalfplane hypergraph as pseudohalfplanes.

\subsection{Helly-type theorems for halfplanes}\label{sec:hp}

Helly's classic theorem in the plane \cite{Helly1923} can be phrased as follows:
\begin{thm}[Helly for convex sets]
	Let $P$ be a set of $n$ points and $\CC$ be a finite family of convex sets in the plane. If every subfamily of $3$ convex sets from $\CC$ intersects in a point of $P$ then there exists a point (not necessarily in $P$) which is in every convex set of $\CC$.
\end{thm}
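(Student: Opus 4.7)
The statement is essentially a direct corollary of the classical Helly theorem for convex sets in the plane, because the hypothesis asks for a common point of every triple \emph{within} $P$, while the conclusion only asks for a common point \emph{somewhere} in the plane. So my plan has two steps: first reduce to the classical formulation, then sketch the classical proof.

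For the reduction, I would simply note that if every triple $C_1,C_2,C_3 \in \CC$ shares a point of $P$, then in particular every triple has a nonempty intersection in the ordinary set-theoretic sense. So it suffices to prove the classical version: for a finite family of convex sets in $\mathbb{R}^2$, if every three have nonempty intersection, then the whole family has nonempty intersection.

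For the classical version, I would proceed by induction on $m = |\CC|$. The base case $m \leq 3$ is immediate from the hypothesis. For the inductive step with $m \geq 4$, for each $i \in \{1,\dots,m\}$ apply the inductive hypothesis to $\CC \setminus \{C_i\}$ to obtain a point $p_i \in \bigcap_{j \neq i} C_j$. It suffices to handle the case $m=4$ (the case $m \geq 5$ reduces to $m = 4$ by considering any four sets and using induction). For $m=4$, apply Radon's theorem to the four points $p_1,p_2,p_3,p_4 \in \mathbb{R}^2$: partition them into two sets whose convex hulls meet, say at a point $q$. Then $q$ lies in $\mathrm{conv}(p_i, p_j) \cap \mathrm{conv}(p_k, p_l)$ for some partition $\{i,j\}, \{k,l\}$ of $\{1,2,3,4\}$; since each $p_r$ lies in all of the $C_s$ with $s \neq r$, convexity of each $C_s$ forces $q$ to belong to every $C_s$.

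The main (and only) substantive step is the classical plane Helly theorem via Radon, which is standard; there is no genuine obstacle here since the theorem is stated verbatim as a reference point for the more involved pseudohalfplane variants to follow. I would therefore keep the exposition short, perhaps simply citing \cite{Helly1923} rather than reproving Radon's theorem and the full induction.
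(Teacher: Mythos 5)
Your proposal is correct and matches the paper's treatment: the paper states this result as the classical Helly theorem and simply cites \cite{Helly1923} without proof, exactly as you ultimately recommend, since the hypothesis that every triple meets in a point of $P$ only strengthens the classical hypothesis while the conclusion (a common point not necessarily in $P$) is unchanged. Your Radon-based sketch is the standard argument; the only loose spot is the parenthetical reduction of $m\ge 5$ to $m=4$, which is cleanest either by applying Radon directly to four of the points $p_1,\dots,p_m$ (the resulting point then lies in every $C_s$, including those with $s\ge 5$) or by replacing two sets with their intersection and re-verifying the triple hypothesis via the $m=4$ case.
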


Halman and Jensen et al. \cite{halman,jjr} regarded discrete versions of Helly's theorem, where they require that the point one finds also comes from the set $P$. First, the following simple construction \cite{halman,jjr} shows that we cannot require this for convex sets, even if we replace $3$ by some larger value $k$ and we want to find only some bounded number of vertices that hit all sets: take a set $P$ of $n$ points in convex position, then every subset of points in $P$ can be separated from the rest of the points in $P$ by a convex set. Now for some fixed $k$ let $\CC$ be the family of such separating convex sets for the subsets of points in $P$ of size more than $n-n/k$. Then every subfamily of size $k$ of $\CC$ has a common point in $P$, on the other hand no subset of points in $P$ of size less than $n/k$ hits every set in $\CC$.

They show that replacing convex sets with halfplanes yields interesting problems and prove the following results:

\begin{thm}[Dual Discrete Helly for halfplanes, $3\rightarrow 2$]\label{thm:dualhp1}\cite{jjr}
	Let $P$ be a set of $n$ points and $\cH$ be a family of halfplanes. If every subset of 3 points in $P$ belongs to some halfplane $H\in \cH$ then there exist two halfplanes in $\cH$ whose union covers $P$.
\end{thm}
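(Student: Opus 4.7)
Let $V\subseteq P$ be the vertices of the convex hull of $P$, listed in cyclic order. For any halfplane $H$ the set $A(H):=H\cap V$ is a consecutive arc in this cyclic order, because a line meets the boundary of a convex polygon in at most two points. My plan is to reduce the statement to a cyclic arc-covering problem on $V$ and then promote the resulting cover to a cover of all of $P$.

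\emph{Arc-covering step.} I would first argue, using only triples in $V$, that there exist $H_1,H_2\in\HH$ with $V\subseteq A(H_1)\cup A(H_2)$. Start with any $H_1\in\HH$ that contains at least one hull vertex; if $A(H_1)=V$ then $H_1$ is convex and contains $\operatorname{conv}(V)\supseteq P$, and we take $H_2=H_1$. Otherwise $\bar A:=V\setminus A(H_1)$ is a nonempty arc. Form a triple in $V$ consisting of the two endpoints of $\bar A$ together with one more vertex, chosen to be a vertex of $\bar A$ strictly between its endpoints when $|\bar A|\geq 3$, or any vertex of $A(H_1)$ when $|\bar A|\leq 2$. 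The hypothesis gives a halfplane $H_2$ containing this triple, and the arc property of $A(H_2)$ (containing both endpoints of $\bar A$ and, when $|\bar A|\geq 3$, a point in its interior) forces $\bar A\subseteq A(H_2)$. Hence $V\subseteq A(H_1)\cup A(H_2)$.

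\emph{Interior-point step.} The harder part is ensuring that the pair $(H_1,H_2)$ can be chosen so that $H_1\cup H_2$ also covers the interior points of $P$. Since $H_1\cup H_2$ is not convex, the open wedge $\overline{H_1}\cap\overline{H_2}$ may cut across $\operatorname{conv}(P)$ and leave interior points of $P$ uncovered even when all hull vertices are covered. I would take a pair $(H_1,H_2)$ with $V\subseteq H_1\cup H_2$ minimizing $|P\setminus(H_1\cup H_2)|$ and aim for a contradiction if the uncovered set is nonempty. If $p\in P\setminus(H_1\cup H_2)$, then writing $p$ as a convex combination of hull vertices, together with $p\notin H_i$, forces the existence of hull vertices $v\in H_1\setminus H_2$ and $w\in H_2\setminus H_1$; the triple hypothesis applied to $\{p,v,w\}$ yields $H_3\in\HH$ containing all three. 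The goal is to show that either $(H_1,H_3)$ or $(H_3,H_2)$ still covers $V$ but has a strictly smaller uncovered set, contradicting minimality.

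The main obstacle is precisely this last implication: naive replacement can inadvertently drop a hull vertex from the covered set. To make the argument go through, $v$ and $w$ must be chosen so that the consecutive arc $A(H_3)$, forced by containing both of them, fills the "gap" through which $p$ escaped---most naturally, the two hull vertices adjacent to where the wedge boundary meets $\operatorname{conv}(V)$. Alternatively, one can split $\HH$ into "upper" and "lower" halfplanes (mirroring the ABA-free decomposition that underlies the pseudohalfplane generalization) and search for one halfplane of each orientation, which may bypass the interior-point difficulty altogether.
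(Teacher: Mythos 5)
There is a genuine gap in both halves of your plan, and the first one is an actual false inference rather than just a missing detail. In the arc-covering step you conclude, from the fact that the arc $A(H_2)$ contains the two endpoints $a,b$ of $\bar A$ and one interior vertex $c$ of $\bar A$, that $\bar A\subseteq A(H_2)$; this fails as soon as $|\bar A|\ge 4$. Concretely, with hull vertices $1,\dots,6$ in cyclic order and $A(H_1)=\{1\}$, so that $\bar A=(2,3,4,5,6)$ has endpoints $2,6$ and, say, $c=4$, the halfplane $H_2$ supplied by the hypothesis for the triple $\{2,4,6\}$ may well have $A(H_2)=\{4,5,6,1,2\}$ --- a legitimate consecutive arc containing the triple but missing $3$ --- and then $A(H_1)\cup A(H_2)$ still omits a hull vertex. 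The fix, which is what the paper does for the pseudohalfplane generalization (Theorem \ref{thm:dualpseudohp32}, following \cite{jjr}), is to choose $H_1$ maximizing $|H_1\cap V|$ and to build the triple from the two endpoints $p,q$ of the arc $A(H_1)$ itself (not of its complement) together with one $r\notin H_1$: the arc $A(H_2)$ contains one of the two $p$--$q$ arcs, and if it contained $A(H_1)$ it would also contain $r$ and contradict maximality, so it contains the complementary arc \emph{and the shared endpoints} $p,q$.

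That shared-endpoint structure is exactly what dissolves your second difficulty, which you correctly identify as the main obstacle but do not resolve: your minimal-uncovered-set exchange is only a sketch, and you concede that the replacement step may drop hull vertices. Once $H_1$ and $H_2$ both contain $p$ and $q$ and cover complementary $p$--$q$ arcs, the line through $p$ and $q$ splits $\operatorname{conv}(P)$ into two pieces, each being the convex hull of $\{p,q\}$ together with the hull vertices of one arc, hence each piece lies in the corresponding (convex) halfplane; every point of $P$ is then covered and no local-improvement argument is needed. (The abstract counterpart of this is Lemma \ref{lem:twopshpcover}, proved via the ABA-free condition after splitting into topsets and bottomsets --- essentially the upper/lower decomposition you mention at the end as a possible alternative route.) As written, neither of your two steps is complete.
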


They give an example that this is tight, that is, $3$ cannot be replaced by $2$. They also show the following:

\begin{thm}[Dual Discrete Helly for halfplanes, $2\rightarrow 3$]\cite{jjr}\label{thm:dualhp2}
	Let $P$ be a set of $n$ points and $\cH$ be a family of halfplanes. If every pair of points in $P$ belongs to some halfplane $H\in \cH$ then  there exists $3$ halfplanes in $\cH$ whose union covers $P$.
\end{thm}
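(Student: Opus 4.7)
The plan is to exploit the convex hull structure of $P$ and choose three halfplanes by an extremal selection. A halfplane $H$ meets the vertex set of $\mathrm{CH}(P)$ in a cyclically consecutive arc, and moreover $H$ contains every point of $P$ lying in the cap cut off by its bounding line. So the essential combinatorial content of the hypothesis lives on the cyclic order of hull vertices, and verifying that three halfplanes cover $P$ reduces to verifying that the three corresponding caps cover $\mathrm{CH}(P)\cap P$.

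First I would pick three hull vertices $v_1,v_2,v_3$ that split the cyclic order of hull vertices into three arcs, e.g.\ by taking extremal points in three directions $120^\circ$ apart. The pairwise hypothesis yields witnesses $H_{12},H_{13},H_{23}\in\cH$ with $\{v_i,v_j\}\subseteq H_{ij}$. The crucial refinement is to make each choice extremal: among all members of $\cH$ containing $\{v_i,v_j\}$, I would take one whose bounding line is pushed as far from $v_k$ (the third vertex) as possible, equivalently the one maximizing the cap on the side opposite $v_k$.

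Then I would argue that, with these extremal choices, $H_{12}\cup H_{13}\cup H_{23}$ covers $P$. If some point $p\in P$ escaped, $p$ would lie in the bounded convex region cut out by the three complementary halfplanes, so $p$ would be strictly separated by each bounding line from the corresponding pair $\{v_i,v_j\}$. Applying the pairwise hypothesis to pairs $(p,v_k)$ for $k=1,2,3$ would produce additional halfplanes in $\cH$. Using the geometric positions of $p$ and the $v_i$, one of these new halfplanes should contain some pair $\{v_i,v_j\}$ with a bounding line strictly farther from $v_k$ than that of $H_{ij}$, or should force $p$ itself into one of the $H_{ij}$, contradicting either extremality or the assumption that $p$ is uncovered.

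The hard part will be carrying out the contradiction without an unmanageable case split: depending on where $p$ lies relative to the triangle $v_1v_2v_3$ and relative to the three bounding lines, one has to argue that the pairwise witnesses involving $p$ either improve on one of the chosen $H_{ij}$ or reveal $p$ inside one already. A shorter route, available later in this paper, is to deduce the statement from its pseudohalfplane generalization (Theorem \ref{thm:dualpseudohp23}), where the extremal vertex machinery developed in Section \ref{sec:extremal} takes the place of the hands-on geometric extremality used above.
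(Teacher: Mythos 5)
There is a genuine gap in your main argument: fixing three anchor hull vertices $v_1,v_2,v_3$ and choosing one witness halfplane per anchor pair cannot work, no matter how the witnesses are chosen, because covering the hull vertices by three halfplanes (one arc each) does not force coverage of the rest of $P$ — unlike the two-halfplane case, where two complementary arcs sharing both endpoints do force full coverage by convexity. Concretely, let $P=\{v_1,v_2,v_3,c\}$ with $v_1v_2v_3$ an equilateral triangle and $c$ its centroid, and let $\cH$ consist of the three halfplanes $H_{ij}$ bounded by the line $v_iv_j$ on the side away from $v_k$ (so $H_{ij}\cap P=\{v_i,v_j\}$) together with three halfplanes $G_k$ with $G_k\cap P=\{v_k,c\}$ (such $G_k$ exist since the segment $v_kc$ can be strictly separated from the opposite edge). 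Every pair of points of $P$ lies in a member of $\cH$, and the theorem holds here (even $G_1\cup H_{23}=P$), but your anchors are the three vertices, the only member of $\cH$ containing $\{v_i,v_j\}$ is $H_{ij}$, so your extremal selection is forced and its union misses $c$. Moreover your intended contradiction does not materialize: the witnesses for the pairs $(c,v_k)$ are the $G_k$, which contain only one anchor each, so they neither improve any $H_{ij}$ nor place $c$ in one of them. The underlying false step is the reduction of ``three halfplanes cover $P$'' to a statement about arcs of hull vertices; a correct argument must either let the three chosen halfplanes depend on all of $P$ (e.g.\ a greedy/maximal-arc choice as in the proof of Theorem \ref{thm:dualpseudohp32}) or abandon the one-witness-per-anchor-pair template altogether.

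Your fallback — deducing the statement from Theorem \ref{thm:dualpseudohp23} — is sound and is in effect what this paper does: Theorem \ref{thm:dualhp2} is quoted from \cite{jjr}, and the paper's own contribution is the pseudohalfplane generalization, proved not geometrically but by the structural dichotomy of Lemma \ref{lem:implicit} (either three hyperedges already cover $V$, or $\HH$ is the dual of a pseudohalfplane hypergraph) followed by an application of the primal $2\rightarrow 3$ result (Theorem \ref{thm:primalpshp}) to the dual. If you want a self-contained proof of the halfplane case, that duality route, or a maximal-arc greedy argument on the cyclic order of hull vertices, is the way to go; the fixed-anchor extremal selection is not repairable.
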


\begin{thm}[Primal Discrete Helly for halfplanes, $3\rightarrow 2$]\cite{jjr}\label{thm:primalhp}
	Let $P$ be a set of $n$ points and $\cH$ be a family of halfplanes. If every triple of halfplanes in $\cH$ intersects at a point in $P$, then there exists a set of two points in $P$ which intersects each $H\in \cH$.
\end{thm}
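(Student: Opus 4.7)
My plan is to derive Theorem~\ref{thm:primalhp} directly from Theorem~\ref{thm:dualhp2} via a complementation trick. The observation driving the proof is that the family of halfplanes is closed under (approximate) complementation: after a small perturbation we may assume that no point of $P$ lies on the bounding line of any halfplane in $\cH$. Then for each $H \in \cH$, the closed halfplane $\bar H$ lying on the opposite side of the bounding line satisfies $P \cap \bar H = P \setminus (P \cap H)$. Write $\bar\cH = \{\bar H : H \in \cH\}$, which is again a family of halfplanes on the same point set $P$.

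I would then argue by contrapositive. Assume for contradiction that no two points of $P$ together hit every $H \in \cH$. Then for every pair $\{p_1, p_2\} \subseteq P$, some $H \in \cH$ misses both $p_1$ and $p_2$, equivalently $\{p_1, p_2\} \subseteq \bar H$. Hence every pair of points of $P$ is covered by some member of $\bar\cH$. Applying Theorem~\ref{thm:dualhp2} to the halfplane family $\bar\cH$ then produces three halfplanes $\bar H_1, \bar H_2, \bar H_3 \in \bar\cH$ whose union is $P$. Complementing back, this translates to $P \cap H_1 \cap H_2 \cap H_3 = \emptyset$, which directly contradicts the assumption that every triple of halfplanes in $\cH$ has a common point in $P$. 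So the desired two points in $P$ must exist.

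The only real subtlety is the general-position assumption: we need $\{H, \bar H\}$ to partition $P$ cleanly, which fails if some $p \in P$ happens to lie on the bounding line of $H$. This is handled by perturbing either the point set or the bounding lines by an arbitrarily small amount, which preserves every incidence $p \in H$ in general position and so changes neither the hypothesis nor the conclusion. Conceptually the whole argument is the observation that \emph{primal $3\to 2$ and dual $2 \to 3$ are logically equivalent under complementation for halfplanes}. Since the paper already records (footnote to Definition~\ref{def:pseudo}) that the family of complements of an ABA-free hypergraph is again ABA-free, the same self-duality should be available in the pseudohalfplane setting, which is presumably how the authors unify their primal and dual treatments.
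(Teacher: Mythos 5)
Your complementation argument is correct and is essentially the approach the paper itself takes for the pseudohalfplane generalization (Theorem~\ref{thm:primalpshp32}), whose proof it explicitly describes as following the halfplane argument of \cite{jjr}: assume no two points suffice, pass to the complementary halfplanes, apply the dual $2\to 3$ theorem, and contradict the triple-intersection hypothesis. The only detail worth adding is the degenerate cases ($|\cH|\le 2$ or $|P|\le 2$), which the paper disposes of separately before complementing, and a terminological nit: what you are using is closure under complementation, not self-duality.
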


The above two results are implied by their following result about convex pseudodisks:

\begin{thm}[Primal Discrete Helly for convex pseudodisks, $3\rightarrow 2$]\cite{jjr}\label{thm:convpseudodisks}
	Let $P$ be a set of $n$ points and $\cD$ be a family of convex pseudodisks. If every triple of pseudodisks in $\cD$ intersects at a point in $P$, then there exists a set of two points in $P$ which intersects each $D\in \cD$.
\end{thm}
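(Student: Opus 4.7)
The plan is to combine Helly's theorem for planar convex sets with an extremal choice of pseudodisk. Since each member of $\cD$ is a convex planar set and the triple condition provides common points in $P \subset \R^2$, classical Helly yields that $K := \bigcap_{D \in \cD} D$ is non-empty. If $K \cap P \neq \emptyset$, any such point is a $1$-piercing of $\cD$ and we are done. Otherwise I would pick an extremal pseudodisk $D^* \in \cD$ minimizing $|D^* \cap P|$ and set $S := D^* \cap P$. Applying the triple condition to $(D^*, D^*, D)$ for each $D \in \cD$ shows $D \cap S \neq \emptyset$, so $S$ itself is a hitting set for $\cD$; if $|S| \leq 2$, we are done.

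Suppose $|S| = m \geq 3$, and for contradiction that no pair in $S$ is a $2$-piercing. Then for every pair $\{a,b\} \subset S$ there exists $D_{ab} \in \cD$ with $a, b \notin D_{ab}$, forcing $D_{ab} \cap S \subseteq S \setminus \{a,b\}$. For any two disjoint pairs $\{a, b\}, \{c, d\} \subset S$, the triple condition applied to $(D^*, D_{ab}, D_{cd})$ yields a point $p \in P$ in the common intersection that must lie in $D^* \cap P = S$, hence in $S \setminus \{a, b, c, d\}$. When $m \in \{3,4\}$ this set is empty and we reach the desired contradiction, so some pair of $S$ already pierces $\cD$.

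The main obstacle is the case $m \geq 5$, where the naive disjoint-pair argument leaves possible common elements and does not immediately contradict the triple condition. Here the convex-pseudodisk hypothesis (any two boundaries cross at most twice, and each $D$ is convex) must enter the argument. I would order the points of $S$ along a convex curve---for instance the boundary of $\mathrm{conv}(S)$, or the angular order around an interior point of $K$---and use this order to select missing pseudodisks $D_{ab}$ whose intersections $D_{ab} \cap S$ are geometrically restricted, so that their combined complements cover $S$. Iterating the triple condition with such carefully chosen $D_{ij}$'s should produce a point constrained to an empty set. The hardest step will be making this geometric restriction on $D_{ab} \cap S$ precise for convex pseudodisks rather than for genuine disks or halfplanes, since the points of $S$ lie in the interior of $D^*$ rather than on its boundary.
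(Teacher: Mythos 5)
First, note that Theorem~\ref{thm:convpseudodisks} is quoted from \cite{jjr} and is not proved in this paper, so there is no in-paper argument to compare yours against; I can only assess the proposal on its own terms. Your reduction is fine as far as it goes: Helly gives $K\neq\emptyset$ (though you never really use it), the minimal pseudodisk $D^*$ gives a hitting set $S=D^*\cap P$, and the contradiction for $|S|\le 4$ is correct --- with the small repair that for $m=3$ there are no two disjoint pairs, so you must instead take two overlapping pairs (e.g.\ $\{a,b\}$ and $\{a,c\}$) and observe that the common point of $(D^*,D_{ab},D_{ac})$ lies in $S\setminus(\{a,b\}\cup\{a,c\})=\emptyset$. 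Note that none of this uses convexity or the pseudodisk condition, which is consistent with the fact that these cases cannot be where the content lies: for general convex sets the theorem is false (see the convex-position construction recalled at the start of Section~\ref{sec:hp}).

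The genuine gap is the case $m\ge 5$, which is the entire theorem, and your paragraph about it is a plan rather than a proof. Two concrete obstacles: (i) your strategy presupposes that if any piercing pair exists, one exists inside $S=D^*\cap P$; this is plausible but nowhere justified, and if it fails the whole contradiction scheme is aimed at the wrong target. (ii) The ``geometric restriction'' you hope for --- that each $D\cap S$ is an interval in some cyclic order on $S$ --- is exactly the hard step, and it is not true in the naive form: the points of $S$ need not be in convex position, and a convex pseudodisk can meet $S$ only in points interior to $\mathrm{conv}(S)$, so neither the convex-hull order nor an angular order around a point of $K$ makes the traces $D\cap S$ into intervals without substantial further work (this is where the ``two boundaries cross at most twice'' condition must actually be deployed, and you have not used it anywhere). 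As written, the argument proves the statement only when the minimal pseudodisk contains at most four points of $P$, so the proof is incomplete.
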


\subsection{Helly-type theorems for pseudohalfplanes}\label{sec:pshp}

We aim to prove results about pseudohalfplanes similar to these about halfplanes from the previous section. First we show discrete Helly-type results for ABA-free hypergraphs:

\begin{thm}[Primal Discrete Helly for ABA-free hypergraphs, $2\rightarrow 2$]\label{thm:primalaba}
	Given an ABA-free $\HH$ such that every pair of hyperedges has a common vertex, there exists a set of at most two vertices that hits every hyperedge of $\HH$.
\end{thm}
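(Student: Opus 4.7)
The plan is to reduce the theorem to a short argument on the \emph{extremal} (unskippable) vertices of $\HH$, whose structural theory will be developed in Section~\ref{sec:extremal}. I will freely use two properties that make extremal vertices behave like convex-hull vertices in the geometric realisation by points and upward pseudohalfplanes: (i) every non-empty hyperedge of an ABA-free hypergraph contains at least one extremal vertex, and (ii) the extremal vertices contained in any fixed hyperedge form a contiguous block in the list of all extremal vertices (ordered by the underlying vertex order). For a hyperedge $A$, write $l_A^E$ and $r_A^E$ for the leftmost and rightmost extremal vertex contained in $A$.

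Set $v_1 := \min_{A\in\HH} r_A^E$ and $v_2 := \max_{A\in\HH} l_A^E$. I will show that $\{v_1,v_2\}$ is a hitting set. The easy case is $v_1\geq v_2$ in the vertex order: for every $A$ one has $l_A^E\leq v_2\leq v_1\leq r_A^E$, and~(ii) immediately puts both $v_1$ and $v_2$ in $A$. So assume $v_1<v_2$; the key step is then to show that $v_1$ and $v_2$ are consecutive in the list of extremal vertices. Suppose for contradiction that some extremal vertex $e$ satisfies $v_1<e<v_2$, and fix $A_1,A_2\in\HH$ with $r_{A_1}^E=v_1$ and $l_{A_2}^E=v_2$. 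Then $e\notin A_1$ (because $e>r_{A_1}^E$), so the unskippability of $e$ forces $A_1$ to avoid every vertex greater than $e$: otherwise $A_1$ would contain an extremal vertex $\leq v_1<e$ and a vertex $>e$ but miss $e$, skipping it. Hence $A_1\subseteq(-\infty,e)$, and symmetrically $A_2\subseteq(e,\infty)$; this gives $A_1\cap A_2=\emptyset$, contradicting the pairwise-intersection hypothesis. Once $v_1$ and $v_2$ are known to be consecutive, for any $A$ either $r_A^E=v_1$, giving $v_1\in A$, or $r_A^E\geq v_2$, in which case $l_A^E\leq v_2\leq r_A^E$ and~(ii) yield $v_2\in A$.

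The main obstacle is not this final case analysis but the preparation in Section~\ref{sec:extremal}: proving properties~(i) and~(ii) purely combinatorially from the ABA-free axiom. These rely on the structural fact that for any two hyperedges $A,B$ of an ABA-free hypergraph the sets $A\setminus B$ and $B\setminus A$ are separated by a single threshold in the vertex ordering, mimicking the single crossing of two upward-pseudohalfplane boundaries. Once~(i) and~(ii) are in hand, the pairwise-intersection hypothesis enters the proof of Theorem~\ref{thm:primalaba} essentially in one place: the disjointness step $A_1\cap A_2=\emptyset$ that forces consecutiveness of $v_1$ and $v_2$.
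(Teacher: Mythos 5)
Your proof is correct. The underlying idea is the same as the paper's -- restrict attention to the unskippable vertices, on which every hyperedge traces an interval (your property (ii) is the paper's Observation~\ref{obs:topintop} specialised to topsets, and property (i) is Lemma~\ref{lem:unskippable}), and then run a one-dimensional Helly argument -- but the execution differs. The paper takes a containment-minimal hitting set $R$ inside the unskippable vertices and rules out $|R|\ge 3$ directly: three vertices $a<b<c$ of $R$ have private hyperedges $A,B,C$, and a common vertex $s$ of $A$ and $C$, say $s<b$, forces $C$ to skip the unskippable vertex $b$. You instead construct the two hitting vertices explicitly as $v_1=\min_A r_A^E$ and $v_2=\max_A l_A^E$ and prove they are consecutive among the extremal vertices, the disjointness of $A_1$ and $A_2$ playing the role of the paper's skipping contradiction. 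Both contradictions are the same mechanism (a hyperedge that would have to skip an unskippable vertex strictly between two of its elements); what your version buys is a concrete description of the hitting pair, in the spirit of the classical interval-Helly proof, at the cost of a slightly longer case analysis. One small point to keep in mind when you write up Section~\ref{sec:extremal}: your well-definedness of $v_1,v_2$ needs every hyperedge to be non-empty, which follows from the pairwise-intersection hypothesis exactly as in the paper's proof, so nothing is lost.
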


As the dual of an ABA-free hypergraph is also an ABA-free hypergraph \cite{abafree}, this implies (and is in fact equivalent to):
\begin{cor}[Dual Discrete Helly for ABA-free hypergraphs, $2\rightarrow 2$]\label{cor:dualaba}
	Given an ABA-free $\HH$ on vertex set $V$ of size $n\ge 2$ such that for every pair of vertices there is a hyperedge of $\HH$ containing both of them, there exists at most two hyperedges of $\HH$ whose union covers $V$.
\end{cor}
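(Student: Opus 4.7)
The plan is to derive Corollary~\ref{cor:dualaba} as an immediate consequence of Theorem~\ref{thm:primalaba} by passing to the dual hypergraph, exploiting the fact quoted just above from \cite{abafree} that the dual of an ABA-free hypergraph is itself ABA-free with respect to a suitable ordering of the new vertex set. No new combinatorial content is needed.

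Concretely, I would form the dual hypergraph $\HH^*$ whose vertices are the hyperedges of $\HH$ and in which, for each $v\in V$, there is a hyperedge $\{H\in\HH : v\in H\}$. By the quoted duality, $\HH^*$ is ABA-free. The hypothesis that every pair $u,v\in V$ is contained in some hyperedge $H\in\HH$ translates precisely into the statement that every pair of hyperedges of $\HH^*$ shares a common vertex: the hyperedges of $\HH^*$ corresponding to $u$ and $v$ both contain the vertex of $\HH^*$ corresponding to $H$. So Theorem~\ref{thm:primalaba} applies to $\HH^*$ and yields a set of at most two vertices of $\HH^*$ hitting every hyperedge of $\HH^*$.

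Reading this back through the duality, these at most two vertices of $\HH^*$ are at most two hyperedges $H_1,H_2$ of $\HH$, and the hitting property in $\HH^*$ becomes exactly the covering property $V\subseteq H_1\cup H_2$ in $\HH$, which is the desired conclusion. I do not anticipate any real obstacle: the only point that is used without reproving is the duality of the ABA-free property from \cite{abafree}, which the authors already invoke in the sentence preceding the corollary. In this sense the corollary is a genuine one-line translation of Theorem~\ref{thm:primalaba}, and the statement ``in fact equivalent to'' from the preceding paragraph reflects that the same dualization also runs in reverse.
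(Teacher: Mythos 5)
Your proposal is correct and is exactly the argument the paper intends: the corollary is obtained by applying Theorem~\ref{thm:primalaba} to the dual hypergraph, using the cited fact that duals of ABA-free hypergraphs are ABA-free, and translating hitting into covering. No gap.
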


Applying this twice to the two ABA-free parts of a pseudohalfplane hypergraph implies easily that $2\rightarrow 4$ is true for pseudohalfplanes but we can prove a better bound which is optimal (we note that this was not known earlier even in the special case of halfplanes):

\begin{thm}[Primal Discrete Helly for pseudohalfplanes, $2\rightarrow 3$]\label{thm:primalpshp}
	Given a pseudohalfplane hypergraph $\HH$ such that every pair of hyperedges has a common vertex, there exists a set of at most $3$ vertices that hits every hyperedge of $\HH$.
\end{thm}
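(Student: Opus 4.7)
The plan is to decompose $\HH$ into its two ABA-free pieces, apply Theorem~\ref{thm:primalaba} to each, and then use the extremal-vertex theory from Section~\ref{sec:extremal} to merge one vertex between the two resulting hitting sets. By Definition~\ref{def:pseudo} there is an ABA-free hypergraph $\FF$ with $\HH\subseteq\FF\cup\bar\FF$; since $\bar\FF$ is also ABA-free (as noted in the excerpt), both $\HH^+:=\HH\cap\FF$ and $\HH^-:=\HH\cap\bar\FF$ are ABA-free hypergraphs whose pairs still share a vertex, so Theorem~\ref{thm:primalaba} yields a 2-element hitting set for each, hence a 4-element hitting set for $\HH$ for free. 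The real task is to save one vertex.

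To do so, my first step would be to prove (or extract from the proof of Theorem~\ref{thm:primalaba}) a canonical-choice lemma: for an ABA-free sub-hypergraph of $\HH$ with pairwise-intersecting hyperedges, one can always take the 2-element hitting set to consist of extremal vertices of $\HH$, for instance the leftmost and the rightmost extremal vertex contained in every hyperedge that reaches to the corresponding side. The point of this step is that extremal vertices are a property of the ordered vertex set together with the pseudohalfplane structure of $\HH$ as a whole, so the hitting-pair candidates for $\HH^+$ and for $\HH^-$ are drawn from the same small set of extremal vertices of $\HH$.

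With a canonical choice in hand, the second step is to argue that the two pairs must overlap. If the leftmost extremal vertex hitting $\HH^+$ coincides with the leftmost extremal vertex hitting $\HH^-$, only $3$ vertices remain, and we are done. Otherwise, say the leftmost extremal vertex for $\HH^-$ lies strictly to the right of the one for $\HH^+$: then the hyperedge $H^-\in\HH^-$ that forces this later choice avoids the leftmost extremal vertex of $\HH^+$; combined with the pairwise-intersection hypothesis against a hyperedge $H^+\in\HH^+$ that forces the earlier leftmost choice for $\HH^+$, the ABA-free structure should yield a common extremal vertex witnessing a valid swap that realigns the two pairs. A symmetric argument handles the right endpoint.

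The main obstacle is precisely this canonicalization/alignment step. The $2$-element hitting set from Theorem~\ref{thm:primalaba} is not a priori extremal, so the canonical-choice lemma must first be established, and the merging is intrinsically a pseudohalfplane phenomenon (upward interacts with downward) requiring the new properties of extremal vertices to be developed in Section~\ref{sec:extremal}---in particular, the combinatorial analogue of the fact that a line separating two convex-hull vertices meets the hull in a prescribed arc. I expect that once these structural lemmas are available, the theorem follows in a short combinatorial argument combining them with Theorem~\ref{thm:primalaba}.
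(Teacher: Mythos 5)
Your first step is sound: splitting $\HH$ into $\HH\cap\FF$ and $\HH\cap\bar\FF$, each of which is ABA-free with pairwise-intersecting hyperedges, and applying Theorem~\ref{thm:primalaba} twice does give a hitting set of size $4$, and one can indeed arrange that the two pairs consist of topvertices and bottomvertices respectively (Lemma~\ref{lem:unskippable} applied to $\FF$ and to $\bar\FF$ lets the minimal-hitting-set argument of Theorem~\ref{thm:primalaba} run inside the extremal vertices). The genuine gap is the alignment step, which you yourself flag as the main obstacle but for which you offer only the hope that the ABA-free structure ``should yield'' a valid swap. Note how strong the required conclusion is: a vertex common to a pair of topvertices hitting all topsets and a pair of bottomvertices hitting all bottomsets must itself be simultaneously a topvertex and a bottomvertex, and in general only the leftmost and rightmost vertices are guaranteed to have that property. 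Nothing in your sketch forces either endpoint of the order to be usable, and a $3$-element hitting set for $\HH$ need not decompose at all as (two topvertices hitting the topsets) $\cup$ (two bottomvertices hitting the bottomsets) with an overlap --- a topset may well be hit only by a bottomvertex of the hitting set. So the claim you would need to prove is both unproven and quite possibly not true in the form you state it; as written the argument only delivers $2\rightarrow 4$.

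The paper avoids the decompose-and-merge strategy entirely. It takes a single containment-minimal hitting set $R$ inside the full extremal vertex set $C$ (this exists by Claim~\ref{claim:unskippablepshp}) and bounds $|R|$ directly using the circular order on $C$: if $R$ contained four vertices $p,q,r,s$ in circular order, minimality would produce $H_1\ni p$ avoiding $q,r,s$ and $H_2\ni r$ avoiding $p,q,s$; since every hyperedge meets $C$ in a circular interval (Lemma~\ref{lem:hullinterval}), $H_1\cap C$ and $H_2\cap C$ lie in the two different arcs of $C\setminus\{q,s\}$, and Lemma~\ref{lem:twopshpdisjoint} then forces $H_1\cap H_2=\emptyset$, contradicting the hypothesis. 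This is exactly the ``combinatorial analogue of a separating line meeting the hull in a prescribed arc'' that you anticipated needing, but it is deployed on one unified hitting set rather than to reconcile two separately chosen pairs. If you want to salvage your outline, the fix is to drop the top/bottom split at the hitting-set stage and instead prove and use the interval and disjointness lemmas for arbitrary hyperedges of $\HH$ with respect to the circular order on $C$.
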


We can also prove the following:

\begin{thm}[Primal Discrete Helly for pseudohalfplanes, $3\rightarrow 2$]\label{thm:primalpshp32}
		Given a pseudohalfplane hypergraph $\HH$ such that every triple of hyperedges has a common vertex, there exists a set of at most $2$ vertices that hits every hyperedge of $\HH$.
\end{thm}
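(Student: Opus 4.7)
The plan is to exploit the decomposition $\HH = \HH^+ \cup \HH^-$ implicit in Definition \ref{def:pseudo}, with $\HH^+ \subseteq \FF$ and $\HH^- \subseteq \bar{\FF}$ for some ABA-free family $\FF$, combined with the theory of extremal vertices to be developed in Section \ref{sec:extremal}. In the degenerate case when one of $\HH^+, \HH^-$ is empty, $\HH$ is (up to complementation) ABA-free; and since the $3$-wise intersection hypothesis implies the $2$-wise one (a triple may repeat a hyperedge, the case $|\HH|\le 1$ being trivial), Theorem \ref{thm:primalaba} directly produces a hitting set of size at most $2$.

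In the main case both $\HH^+$ and $\HH^-$ are nonempty. The natural candidates for the two-vertex hitting set are the leftmost and rightmost extremal vertices $v_L, v_R$ of $V$, which are meant to play the role of the two ``farthest'' convex-hull boundary vertices in the halfplane case. I would prove by contradiction that $\{v_L, v_R\}$ hits every hyperedge: assuming some $H\in\HH$ contains neither $v_L$ nor $v_R$, I would use the extremal-vertex machinery to exhibit two further hyperedges $H_L, H_R \in \HH$ (with $v_L\in H_L$, $v_R\in H_R$, each witnessing the extremality of its endpoint) such that $H\cap H_L\cap H_R=\emptyset$ in $V$, contradicting the $3$-wise hypothesis. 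The argument should split into cases by how $H, H_L, H_R$ distribute among $\HH^+$ and $\HH^-$: in the purely upward or purely downward case, the conflict is produced directly from the ABA-free property of $\FF$ (respectively $\bar{\FF}$) applied to the triple $\{H,H_L,H_R\}$; in the mixed cases, one has to combine it with a complementation argument supplied by the extremal-vertex lemmas.

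The main obstacle is uniformly constructing the witnesses $H_L, H_R$ and verifying the emptiness of the triple intersection across all cases. This needs the combinatorial substitutes, proved in Section \ref{sec:extremal}, for the geometric fact that a halfplane missing both the leftmost and rightmost convex hull vertices of $P$ is ``squeezed between them'' in a way that interacts predictably with other halfplanes. In particular one expects to invoke a lemma to the effect that the set of extremal vertices of $\FF$ coincides with that of $\bar{\FF}$ — the analogue of ``the convex hull is common to upward and downward halfplanes'' — so that $v_L$ and $v_R$ are well-defined regardless of which side of the decomposition a given hyperedge lies on, and another lemma saying that every hyperedge is anchored at two extremal vertices on its boundary. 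Once these are available, the clash between ``$H$ skips both anchors $v_L, v_R$'' and the ABA-free structure should be what forces the empty triple intersection, closing the proof.
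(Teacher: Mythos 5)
Your plan breaks at the first concrete step of the main case: the pair $\{v_L,v_R\}$ consisting of the leftmost and rightmost (extremal) vertices is in general \emph{not} a hitting set, so no case analysis on $H,H_L,H_R$ can produce the desired contradiction. A minimal counterexample: take three vertices $a<b<c$ and the single hyperedge $\{b\}$. The family $\{\{b\}\}$ is trivially ABA-free, so this is a pseudohalfplane hypergraph (geometrically, a halfplane containing only the middle of three points), and every triple of hyperedges meets in $b$; yet $\{a,c\}$ misses the hyperedge $\{b\}$. The correct two vertices must depend on $\HH$, not only on the vertex order. A secondary problem is the auxiliary lemma you hope to invoke, that the extremal vertices of $\FF$ and of $\bar\FF$ coincide: this is false in general. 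The topvertices (unskippable in $\FF$) and the bottomvertices (unskippable in $\bar\FF$) correspond to the upper and lower hull respectively, and only the leftmost and rightmost vertices are guaranteed to lie in both.

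For comparison, the paper avoids constructing the two points directly. It complements every hyperedge to obtain the pseudohalfplane hypergraph $\bar{\HH}$ and argues by contraposition: if no two vertices hit every hyperedge of $\HH$, then every pair of vertices lies in a common hyperedge of $\bar{\HH}$, so the dual $2\rightarrow 3$ theorem (Theorem \ref{thm:dualpseudohp23}) yields three hyperedges of $\bar{\HH}$ covering $V$, i.e.\ three hyperedges of $\HH$ with empty common intersection, contradicting the hypothesis. If you want a direct, extremal-vertex-based argument, the template is the proof of Theorem \ref{thm:primalpshp}: take a containment-minimal hitting set inside $C(\HH)$ and bound its size. But passing from three leftover witnesses to an empty \emph{triple} intersection is genuinely harder than the pairwise disjointness supplied by Lemma \ref{lem:twopshpdisjoint}, which is presumably why the paper routes the proof through the dual statement instead.
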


In the dual setting we have the following results about pseudohalfplanes:

\begin{thm}[Dual Discrete Helly for pseudohalfplanes, $3\rightarrow 2$]\label{thm:dualpseudohp32}
	Given a pseudohalfplane hypergraph $\HH$ on ordered vertex set $V$ with $n\ge 3$ vertices. If every subset of $3$ vertices in $V$ is contained by some hyperedge $H\in \cH$ then there exist at most two hyperedges in $\cH$ whose union covers $V$.
\end{thm}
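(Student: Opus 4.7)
We adapt the halfplane argument of Jensen, Joshi and Ray for Theorem~\ref{thm:dualhp1} to the pseudohalfplane setting, using the extremal vertex machinery developed in Section~\ref{sec:extremal} as the combinatorial substitute for the convex hull. By the realization result of \cite{abafree}, view $V$ as a set of points ordered left-to-right and write $\HH = \HH^{+} \sqcup \HH^{-}$ for its split into upward pseudohalfplanes (so $\HH^{+}\subseteq \FF$ for some ABA-free $\FF$) and downward ones ($\HH^{-}\subseteq \bar\FF$). Let $l$ and $r$ denote the leftmost and rightmost vertices of $V$; by the extremal vertex theory both are extremal for $\FF$ and for $\bar\FF$.

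The central move is to choose one $A\in \HH^{+}$ and one $B\in \HH^{-}$, each containing $\{l,r\}$, such that $A\cup B = V$. The triple hypothesis applied to $\{l,v,r\}$ for each $v\in V$ produces a hyperedge $H_v\in \HH$ containing $\{l,v,r\}$; each $H_v$ is either in $\HH^{+}$ (so it contains $\{l,r\}$ and is a candidate enlargement of $A$) or in $\HH^{-}$ (candidate enlargement of $B$). Using the extremal vertex lemmas of Section~\ref{sec:extremal}---which show that $l$ and $r$ play the role of extreme points of a convex hull and that hyperedges passing through the leftmost and rightmost extremal vertices behave like halfplanes through the leftmost and rightmost points of a point set---one argues that the $H_v$ of upward type can be consolidated into a single $A\in \HH^{+}$ (and analogously for downward). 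This relies on ABA-freeness of $\HH^{+}$ inside $\FF$: two hyperedges of $\HH^{+}$ both containing $\{l,r\}$ interact in a restricted way that, together with the extremal vertex structure, lets one replace them by a single hyperedge of $\HH^{+}$ absorbing both.

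The remaining situation is when all hyperedges of $\HH$ through $\{l,r\}$ are of a single type, say upward. Then the triple hypothesis places every $v\in V$ in some hyperedge of $\HH^{+}$ through $\{l,r\}$; after verifying that every pair of vertices is contained in a common hyperedge of $\HH^{+}$ (deducing the pair case from the triple hypothesis together with the case assumption that no downward hyperedge runs through $\{l,r\}$, again via extremal vertex reasoning), Corollary~\ref{cor:dualaba} applied to the ABA-free hypergraph $\HH^{+}$ produces two hyperedges covering $V$.

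The main obstacle is the consolidation step: rigorously translating the halfplane fact that the family of upward halfplanes through $\{l,r\}$ forms a chain (hence has a maximum element containing all upward-reachable vertices) into a combinatorial statement about $\HH^{+}$-hyperedges through the leftmost and rightmost extremal vertices. This is exactly where the convex-hull-type lemmas for extremal vertices from Section~\ref{sec:extremal} must do the work, in conjunction with the ABA-free property. The pair-covering reduction in the remaining case is a secondary obstacle, handled by combining the triple hypothesis with the structural consequence of having no downward hyperedge through $\{l,r\}$.
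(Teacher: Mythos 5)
There is a genuine gap, and it sits exactly where you flagged your ``main obstacle'': the consolidation step rests on a false premise. You assert as a ``halfplane fact'' that the upward halfplanes containing both the leftmost and rightmost points form a chain, hence have a maximum element. This is not true: take $l=(0,0)$, $r=(10,0)$, $a=(3,-5)$, $b=(7,-5)$. The upward halfplane bounded by the line through $(3,-6)$ and $(7,-4)$ contains $l,r,a$ but not $b$, and its mirror image (reflected about $x=5$) contains $l,r,b$ but not $a$; the two are incomparable. Moreover, even when a common upper bound exists geometrically, there is no reason it belongs to the subfamily $\HH^{+}$: the hypothesis only supplies, for each triple, \emph{some} hyperedge of $\HH$ containing it, and $\HH$ is an arbitrary subfamily, not closed under enlargement. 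A second gap appears in your residual case: Corollary~\ref{cor:dualaba} applied to $\HH^{+}$ needs every \emph{pair} of vertices to lie in a common hyperedge of $\HH^{+}$, and this does not follow from the triple hypothesis plus the assumption that no downward hyperedge contains $\{l,r\}$ --- a triple of interior vertices may be covered only by a downward hyperedge that simply omits $l$ or $r$.

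The structural reason your outline stalls is that you only ever apply the triple hypothesis to triples containing $\{l,r\}$. The paper's argument instead takes a hyperedge $H_1$ maximizing $|H_1\cap C|$, where $C$ is the set of extremal vertices; by Lemma~\ref{lem:hullinterval} its trace on $C$ is an interval of the circular order, and the hypothesis is applied to the two \emph{endpoints} $p,q$ of that interval (in general interior vertices, not $l$ and $r$) together with an extremal vertex $r'\notin H_1$. The resulting $H_2$ traces an interval containing $p$, $q$ and $r'$, which either properly extends $H_1\cap C$ (contradicting maximality) or covers the complementary arc, in which case Lemma~\ref{lem:twopshpcover} gives $H_1\cup H_2=V$. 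To salvage your plan you would need to replace $\{l,r\}$ by the endpoints of such a maximal extremal interval, at which point you are reconstructing that proof rather than giving an alternative one.
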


\begin{thm}[Dual Discrete Helly for pseudohalfplanes, $2\rightarrow 3$]\label{thm:dualpseudohp23}
	Given a pseudohalfplane hypergraph $\HH$ on ordered vertex set $V$ with $n\ge 2$ vertices. If every pair of vertices in $V$ is contained by hyperedge $H\in \cH$ then there exist at most $3$ hyperedges in $\cH$ whose union covers $V$.
\end{thm}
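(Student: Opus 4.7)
By Definition \ref{def:pseudo} we may write $\HH\subseteq\FF\cup\bar\FF$ with $\FF$ (and hence $\bar\FF$) ABA-free on the ordered vertex set $V$; set $\HH_1=\HH\cap\FF$ and $\HH_2=\HH\cap\bar\FF$, so both are ABA-free and together cover $\HH$. The target of our reduction is Corollary \ref{cor:dualaba}, which covers any ABA-free hypergraph satisfying the pair-covering hypothesis by at most $2$ hyperedges. First I would dispose of the easy case: if every pair of vertices of $V$ is already contained in some hyperedge of $\HH_1$ alone (or of $\HH_2$ alone), Corollary \ref{cor:dualaba} applied to the corresponding ABA-free sub-hypergraph directly yields a cover by two hyperedges, better than the claimed bound.

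In the remaining case there exist pairs covered only by $\HH_2$ and other pairs covered only by $\HH_1$. The plan is to select a single hyperedge $H_0\in\HH$ so that on the reduced vertex set $V':=V\setminus H_0$ the pair-covering hypothesis is witnessed entirely on one fixed side, say by $\HH_1$. Since the restriction of an ABA-free hypergraph to a subset of its vertices is still ABA-free, Corollary \ref{cor:dualaba} applied to $\HH_1$ restricted to $V'$ then yields two hyperedges of $\HH_1$ whose union covers $V'$, and together with $H_0$ we cover all of $V$ with three hyperedges, as required.

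Constructing $H_0$ is where the extremal vertex machinery of Section \ref{sec:extremal} enters: extremal vertices of a pseudohalfplane hypergraph are designed to play the role of the convex hull boundary of a point set. A natural candidate for $H_0$ is a hyperedge of $\HH_2$ chosen to contain ("peel off") all extremal vertices that lie on the $\HH_2$-side of the arrangement, so that $V'$ sits on the opposite side and any $\HH_2$-hyperedge covering a pair inside $V'$ would violate the extremality. The main obstacle is precisely this last point: proving that after the clever choice of $H_0$ no pair in $V'$ is covered exclusively from $\HH_2$. This is where the combinatorial properties of extremal vertices proved in Section \ref{sec:extremal} must be used; carrying out this structural argument cleanly — likely via a case analysis on the position of the extremal vertices of $V$ relative to $H_0$, together with the "convex-hull-like" behaviour of extremal vertices — is the heart of the proof, and is analogous to the geometric argument available in the halfplane case of Theorem \ref{thm:dualhp2}.
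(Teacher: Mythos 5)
Your proposal is not a proof: the step you yourself call ``the heart of the proof'' --- producing a single hyperedge $H_0\in\HH$ such that every pair of vertices of $V\setminus H_0$ lies in a common hyperedge of $\HH_1$ --- is never carried out, and it is not clear that it can be. Two concrete obstacles. First, $\HH$ is an \emph{arbitrary} subfamily of $\FF\cup\bar\FF$, so a hyperedge that ``peels off all extremal vertices on the $\HH_2$-side'' need not be present in $\HH$ at all; the hypothesis only supplies hyperedges witnessing pairs, and you may only use hyperedges that actually exist. Second, even for a cleverly chosen $H_0\in\HH_2$, nothing rules out a pair $\{u,v\}\subseteq V\setminus H_0$ whose only witnesses lie in $\HH_2$: what you need is that the ``bad'' pairs (those covered only by $\HH_2$-hyperedges) admit a vertex cover by a single member of $\HH$. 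Note that this intermediate claim is strictly \emph{stronger} than the theorem's conclusion restricted to $V\setminus H_0$ (pairwise coverage within $\HH_1$ versus a mere $2$-element cover), so the truth of the theorem does not validate it; it is a nontrivial structural assertion of roughly the same difficulty as the theorem itself, and until it is stated precisely and proved (or refuted), the reduction to Corollary \ref{cor:dualaba} does not go through.

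For comparison, the paper's proof takes a completely different and much shorter route. It invokes Lemma \ref{lem:implicit} (extracted from the proof of Theorem 4.6 of \cite{abafree}): either $3$ hyperedges of $\HH$ already cover $V$ --- done --- or $\HH$ is the dual of a pseudohalfplane hypergraph. In the latter case one applies the already-established primal result, Theorem \ref{thm:primalpshp}, to the dual hypergraph $\HH'$: the hypothesis that every pair of vertices of $\HH$ lies in a common hyperedge dualizes to every pair of hyperedges of $\HH'$ sharing a vertex, so Theorem \ref{thm:primalpshp} yields $3$ vertices of $\HH'$ hitting every hyperedge, i.e.\ $3$ hyperedges of $\HH$ covering $V$. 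If you wish to pursue your direct decomposition instead, you would essentially have to re-prove that dichotomy by hand.
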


We can show a similar result about pseudohemisphere hypergraphs, which generalize both pseudohalfplane hypergraphs and duals of pseudohalfplane hypergraphs.\footnote{The dual of a hypergraph is the hypergraph we get by exchanging the roles of hyperedges and vertices while reversing the containment relation.}

\begin{defi}\label{def:signedpseudo}\cite{abafree}
	A {\em pseudohemisphere hypergraph} is a hypergraph $\HH$ on an ordered set of vertices $V$ such that there exists a set $X\subset V$ and an ABA-free hypergraph $\cal F$ on $V$ such that the hyperedges of $\HH$ are some subset of $\{F\Delta X, \bar F \Delta X \mid F\in \cal F\}$.
\end{defi}

\begin{thm}[Dual Discrete Helly for pseudohemispheres, $2\rightarrow 4$]\label{thm:dualpseudohsp24}
	Given a pseudohemisphere hypergraph $\HH$ on ordered vertex set $V$ with $n\ge 2$ vertices. If every pair of vertices in $V$ is contained by some hyperedge $H\in \cH$ then there exist at most $4$ hyperedges in $\cH$ whose union covers $V$.
\end{thm}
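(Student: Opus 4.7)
The plan is to reduce the problem to two applications of the dual pseudohalfplane $2\rightarrow 3$ theorem (Theorem~\ref{thm:dualpseudohp23}) --- one on each side of the flip set $X$ --- and then to save two hyperedges by exploiting the mixed-pair condition.

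By Definition~\ref{def:signedpseudo}, fix an ABA-free hypergraph $\FF$ on $V$ and a set $X\subset V$ so that every hyperedge of $\HH$ is of the form $F\Delta X$ or $\bar F\Delta X$ with $F\in \FF$, and set $Y:=V\setminus X$. A direct calculation gives $(F\Delta X)\cap Y=F\cap Y$ and $(F\Delta X)\cap X=X\setminus F$, and analogously for $\bar F\Delta X$. Consequently the trace $\HH|_Y$ is contained in $\FF|_Y\cup \overline{\FF|_Y}$ and $\HH|_X$ is contained in $\FF|_X\cup \overline{\FF|_X}$; since the family of complements of an ABA-free hypergraph is itself ABA-free~\cite{abafree}, both traces are pseudohalfplane hypergraphs on their respective ground sets. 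Moreover the pair-cover hypothesis on $\HH$ restricts to a pair-cover hypothesis on each trace: any pair $\{u,v\}$ with $u,v\in Y$ contained in some $H\in \HH$ is already contained in $H\cap Y\in \HH|_Y$, and symmetrically for $X$.

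Applying Theorem~\ref{thm:dualpseudohp23} to $\HH|_Y$ and to $\HH|_X$ then yields three hyperedges $A_1,A_2,A_3\in \HH$ whose traces cover $Y$ and three hyperedges $B_1,B_2,B_3\in \HH$ whose traces cover $X$; their union covers $V$, giving the weaker bound of $6$. To improve this to $4$, the plan is to use the mixed-pair condition (every $x\in X$ and $y\in Y$ lie together in some hyperedge of $\HH$) in order to force two of the six chosen hyperedges to coincide. Concretely, one can try to seed both $2\rightarrow 3$ covers from a common anchor hyperedge witnessing a pair of extremal vertices --- one of $X$ and one of $Y$ --- chosen via the extremal-vertex machinery of Section~\ref{sec:extremal}. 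Alternatively, one can re-run the proof of Theorem~\ref{thm:dualpseudohp23} on $Y$ while simultaneously maintaining partial coverage of $X$, showing that three well-chosen hyperedges already cover $Y$ together with all but a simple piece of $X$ that a single fourth hyperedge then finishes.

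The hard part is this last step. A single hyperedge $F\Delta X\in \HH$ behaves as $F\cap Y$ on $Y$ but as $X\setminus F$ on $X$, so its orientations on the two sides are \emph{reversed}; this reversal is precisely the structural feature that distinguishes pseudohemispheres from pseudohalfplanes. Transporting information across the reversal --- so that a cover built greedily on one side is guaranteed to absorb most of the other side --- is the main technical obstacle, and should require a careful use of the extremal-vertex toolbox of Section~\ref{sec:extremal}. Note that, in contrast to our other theorems, we do not expect the bound $4$ to be sharp: as indicated in Table~\ref{tab:summary}, optimality is left open for pseudohemispheres.
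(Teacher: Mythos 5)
Your reduction to the two traces is sound as far as it goes: the identities $(F\Delta X)\cap Y=F\cap Y$ and $(F\Delta X)\cap X=X\setminus F$ are correct, induced subhypergraphs of ABA-free hypergraphs are ABA-free, so $\HH|_X$ and $\HH|_Y$ are indeed pseudohalfplane hypergraphs inheriting the pair-cover hypothesis, and two applications of Theorem~\ref{thm:dualpseudohp23} legitimately yield a cover by $6$ hyperedges (modulo the trivial cases $|X|\le 1$ or $|Y|\le 1$). But the passage from $6$ to $4$ --- which is the entire content of the theorem beyond this easy bound --- is not proved; you state two candidate strategies (``seed both covers from a common anchor hyperedge'' / ``re-run the proof on $Y$ while maintaining partial coverage of $X$'') and then explicitly identify the obstruction without overcoming it. The obstruction is real: because a hyperedge that is large on $Y$ (namely $F\cap Y$) is the complement $X\setminus F$ on $X$, and because the extremal-vertex circular orders of $\HH|_Y$ and $\HH|_X$ live on disjoint ground sets with no a priori compatibility, there is no evident way to make one hyperedge do double duty on both sides. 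So this is a genuine gap, not a fixable detail.

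The paper avoids the $X$/$Y$ split entirely. It invokes Lemma~\ref{lem:implicit} (extracted from the proof of Theorem~4.6 of \cite{abafree}): a pseudohemisphere hypergraph either admits $4$ hyperedges covering all vertices --- in which case we are done outright --- or it is the \emph{dual} of a pseudohalfplane hypergraph. In the second case the hypothesis ``every pair of vertices lies in a common hyperedge of $\HH$'' dualizes to ``every pair of hyperedges of the pseudohalfplane hypergraph $\HH'$ shares a vertex,'' so Theorem~\ref{thm:primalpshp} produces $3$ vertices of $\HH'$ hitting every hyperedge, i.e.\ $3$ hyperedges of $\HH$ covering $V$. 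If you want to complete a proof along your lines you would need to either prove the $6\to 4$ improvement from scratch or import a dichotomy of this kind; as written, your argument establishes only a $2\rightarrow 6$ statement.
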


As the dual of a pseudohemisphere hypergraph is also a pseudohemisphere hypergraph \cite{abafree}, this also implies:

\begin{thm}[Primal Discrete Helly for pseudohemispheres, $2\rightarrow 4$]\label{thm:primalpseudohemi}
	Given a pseudohemisphere hypergraph $\HH$ such that every pair of hyperedges has a common vertex, there exists a set of at most four vertices that hits every hyperedge of $\HH$.
\end{thm}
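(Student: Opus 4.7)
The plan is to derive this statement directly from Theorem \ref{thm:dualpseudohsp24} via duality, exploiting the fact (cited from \cite{abafree}) that the class of pseudohemisphere hypergraphs is closed under taking the dual. The dual version has already been established, so no new combinatorial work about extremal vertices should be needed here; the proof reduces to a clean translation of hypotheses and conclusions under the duality operation.

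Concretely, given $\HH$ satisfying the assumption, I would form its dual $\HH^*$: its vertex set is the set of hyperedges of $\HH$, and for each vertex $v$ of $\HH$ we include the hyperedge $e_v := \{H \in \HH : v \in H\}$ in $\HH^*$. The assumption that every pair of hyperedges of $\HH$ has a common vertex becomes: every pair of vertices $H_1, H_2$ of $\HH^*$ lies in some hyperedge of $\HH^*$, namely $e_v$ where $v$ is a common vertex of $H_1$ and $H_2$ in $\HH$. This is precisely the hypothesis of Theorem \ref{thm:dualpseudohsp24} applied to the pseudohemisphere hypergraph $\HH^*$.

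Theorem \ref{thm:dualpseudohsp24} then yields at most four hyperedges $e_{v_1},\dots,e_{v_4}$ of $\HH^*$ whose union covers all vertices of $\HH^*$. Unpacking: for every $H \in \HH$ there exists $i$ with $H \in e_{v_i}$, i.e.\ $v_i \in H$. Hence $\{v_1,\dots,v_4\}$ is a set of at most four vertices of $\HH$ that hits every hyperedge, as required.

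The only care needed is for degenerate cases (when $\HH$ has zero or one hyperedges), which are trivial. The substantive obstacle in this chain of reasoning is not here but in Theorem \ref{thm:dualpseudohsp24} itself and in verifying, as in \cite{abafree}, that the dual of a pseudohemisphere hypergraph remains a pseudohemisphere hypergraph; once those are in hand, the present theorem is immediate.
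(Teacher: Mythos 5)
Your proposal is correct and matches the paper's own derivation: Theorem \ref{thm:primalpseudohemi} is obtained there exactly by dualizing Theorem \ref{thm:dualpseudohsp24}, using the fact from \cite{abafree} that the dual of a pseudohemisphere hypergraph is again a pseudohemisphere hypergraph. Your explicit unpacking of the duality translation is just a more detailed write-up of the same one-line argument.
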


\section{Properties of the extremal vertices}\label{sec:extremal}

First we recall and prove some properties of ABA-free and pseudohalfplane hypergraphs.

\begin{defi}
	In an ABA-free hypergraph $\FF$, a vertex $a$ is {\em skippable} if there exists an $A\in \FF$ such that $\min(A)< a < \max(A)$ and $a\notin A$.
	In this case we say that $A$ {\em skips} $a$. 
	A vertex $a$ is {\em unskippable} if there is no such $A$.
\end{defi}

\begin{lem}\label{lem:unskippable}\cite{abafree}
	If $\mathcal F$ is ABA-free, then every $A\in \mathcal F$ contains an unskippable vertex.
\end{lem}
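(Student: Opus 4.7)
I plan to argue by contradiction. Suppose every vertex of $A$ is skippable, so for each $a\in A$ we may fix a witness $B_a\in\FF$ with $\min(B_a)<a<\max(B_a)$ and $a\notin B_a$. The first step is a structural observation: for any $a\in A$ and any $B\in\FF$ skipping $a$, at least one of $\min(B)$ and $\max(B)$ must lie in $A$. Indeed, were both outside $A$, they would lie in $B\setminus A$; together with $a\in A\setminus B$, the triple $\min(B)<a<\max(B)$ would give a forbidden ABA-pattern (Definition~\ref{def:ABA} applied to the pair $(B,A)$), contradicting ABA-freeness of~$\FF$.

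From here I would initiate an iterative descent on $A$. Start with $a^{(0)}:=\max(A)$; since any witness $B$ of $\max(A)$ has $\max(B)>\max(A)$ and thus $\max(B)\notin A$, the observation forces $\min(B)\in A$, and setting $a^{(1)}:=\min(B_{a^{(0)}})<a^{(0)}$ produces a strictly smaller vertex of $A$. Iteratively, as long as the current $a^{(i)}$ admits a witness $B^{(i)}$ with $\min(B^{(i)})\in A$, set $a^{(i+1)}:=\min(B^{(i)})$. The sequence $a^{(0)}>a^{(1)}>\cdots$ is strictly decreasing in the finite set $A$ and hence terminates; but every vertex of $A$ was assumed skippable, so the terminal vertex would be unskippable, giving the desired contradiction.

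The main obstacle is the situation where the current vertex $a^{(i)}$ admits \emph{only} witnesses $B$ with $\min(B)\notin A$ and $\max(B)\in A$ (so the structural observation yields a vertex of $A$ strictly above $a^{(i)}$ rather than below). To resolve this I would compare such a $B$ with the previous witness $B^{(i-1)}$, which by construction satisfies $\min(B^{(i-1)})=a^{(i)}\in A$. Applying ABA-freeness to the pair $(B,B^{(i-1)})$: the vertex $\min(B)<a^{(i)}=\min(B^{(i-1)})$ is not in $B^{(i-1)}$, while $a^{(i)}\in B^{(i-1)}\setminus B$; hence any vertex $a'\in B\cap A$ with $a'>a^{(i)}$ not lying in $B^{(i-1)}$ would produce the forbidden pattern on the triple $\min(B)<a^{(i)}<a'$. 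The remaining possibility forces $B\cap A\cap(a^{(i)},\infty)\subset B^{(i-1)}$, confining the continued search to the strictly smaller range of $A$-vertices in $(a^{(i)},a^{(i-1)})$, on which the procedure iterates until termination. This interplay between successive witnesses, governed by ABA-freeness, is the technical heart of the argument.
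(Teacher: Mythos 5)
This lemma is only cited from \cite{abafree} in the present paper, so there is no in-paper proof to compare against; I will assess your argument on its own terms. Your opening observation is correct and is indeed the key dichotomy: if $B$ skips $a\in A$, then ABA-freeness applied to the pair $(B,A)$ forbids elements of $B\setminus A$ on both sides of $a$, so $\min(B)\in A$ or $\max(B)\in A$ (in fact the whole of $B$ on one side of $a$ lies in $A$, and you will need this stronger form). The descent $a^{(0)}=\max(A)$, $a^{(i+1)}=\min(B^{(i)})$ is also sound as far as it goes. But the sentence ``the terminal vertex would be unskippable'' is false as stated: the descent terminates exactly when the current vertex has no witness whose minimum lies in $A$, which says nothing about unskippability --- every witness of the terminal vertex simply points the other way ($\max(B)\in A$). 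You recognize this, but the resolution you offer does not close the argument.

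Concretely, two things go wrong in the final paragraph. First, from $B\cap(a^{(i)},\infty)\subseteq B^{(i-1)}$ you conclude that the search is ``confined to the $A$-vertices in $(a^{(i)},a^{(i-1)})$,'' but what you have actually shown is only $\max(B)\in B^{(i-1)}\cap A$ with $\max(B)>a^{(i)}$; nothing rules out $\max(B)>a^{(i-1)}$, since $B^{(i-1)}$ certainly contains vertices above $a^{(i-1)}$ (it skips $a^{(i-1)}$, so $\max(B^{(i-1)})>a^{(i-1)}$), and some of these may lie in $A$. Pinning $\max(B)$ down to an interval $(a^{(j+1)},a^{(j)})$ requires a further induction along the whole chain, comparing $B^{(j)}$ with $B^{(j-1)}$ for each $j$; this is immediate only for $i=1$. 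Second, and more seriously, ``the procedure iterates until termination'' is not an argument: you do not say what is iterated on the confined range, exhibit no quantity that strictly decreases, and never derive a contradiction. The genuinely hard configuration --- a vertex of $A$ all of whose witnesses reach rightward while its predecessor's witness reaches leftward --- is precisely where the content of the lemma lies, and it is left unresolved. A workable way to finish is to make a single global extremal choice instead of a local iteration: call $a\in A$ \emph{left-reaching} if some witness $B$ satisfies $B\cap(-\infty,a)\subseteq A$ and \emph{right-reaching} if some witness satisfies $B\cap(a,\infty)\subseteq A$; your dichotomy shows every (skippable) vertex of $A$ is one or the other, $\max(A)$ is left-reaching only and $\min(A)$ is right-reaching only, so the minimal left-reaching vertex $q$ has a predecessor $p$ in $A$ that is right-reaching but \emph{not} left-reaching. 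The witness $B_p$ then contains a vertex $x_0<p$ outside $A$, which cannot lie in $B_q$, and a short case analysis on whether $p\in B_q$ and $q\in B_p$ produces a forbidden ABA pattern between $B_p$ and $B_q$ (or between $B_q$ and $A$). Even this requires checking several cases carefully, which is a further sign that the single comparison you perform cannot suffice.
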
 

Observe that by definition the leftmost (that is, first) and rightmost (that is, last) vertex is unskippable. Thus for every skippable vertex $v$ there exists a closest unskippable vertex after and before $v$.

\begin{lem}\label{lem:assocunskippable}
	If $\mathcal F$ is an ABA-free hypergraph on vertex set $V$ and $v\in V$ is skippable, then every hyperedge $H$ which contains $v$ must contain at least one of the two unskippable vertices before and after $v$ that are closest to $v$.
\end{lem}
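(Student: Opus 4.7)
The plan is to argue by contradiction. Let $u^-$ and $u^+$ denote the closest unskippable vertices before and after $v$ respectively; these exist because $v$ is skippable (hence not extreme) and the leftmost and rightmost vertices of $V$ are unskippable. Suppose some $H\in\mathcal F$ contains $v$ but neither $u^-$ nor $u^+$; I want to derive a contradiction.

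The first step is to show that every vertex of $H$ lies strictly between $u^-$ and $u^+$. Suppose toward contradiction that some $w\in H$ satisfies $w>u^+$. Then $v,w\in H$ with $v<u^+<w$, so $\min(H)\le v<u^+<w\le\max(H)$. Since $u^+\notin H$ by assumption, this means $H$ skips $u^+$, contradicting that $u^+$ is unskippable. The symmetric argument rules out $w<u^-$. Hence $H\subseteq\{x\in V:u^-<x<u^+\}$.

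The second step is to invoke Lemma \ref{lem:unskippable}: since $\mathcal F$ is ABA-free, the hyperedge $H$ must contain some unskippable vertex. However, by the choice of $u^-$ and $u^+$ as the closest unskippable vertices to $v$, every vertex strictly between $u^-$ and $u^+$ is skippable. This contradicts the conclusion of the first step, completing the proof.

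I do not expect any real obstacle: the argument is essentially a two-line observation once one spots that unskippability of $u^\pm$ forces $H$ to lie inside the interval between them, and that the only unskippable vertices near $v$ are $u^-$ and $u^+$ themselves. The only point worth stating carefully is that $v$ itself, lying strictly between $u^-$ and $u^+$, does not interfere since $v$ is skippable by hypothesis.
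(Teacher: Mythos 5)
Your proof is correct and is essentially the paper's own argument: both rely on Lemma \ref{lem:unskippable} to produce an unskippable vertex in $H$ and on the unskippability of the two neighbouring extremal vertices to confine that vertex strictly between them, yielding the contradiction. The only difference is the order of the two steps (you first confine $H$ to the open interval, then invoke the lemma; the paper invokes the lemma first and then locates the resulting vertex), which is immaterial.
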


\begin{proof}
	Assume on the contrary. Let $l$ (resp. $r$) be the closest unskippable vertex to $v$ left to $v$ (resp. right to $v$). By Lemma \ref{lem:unskippable} $H$ contains some unskippable vertex $w$ different from $l$ and $r$. If $w$ is left to $l$ (resp. right to $r$) then $H$ skips $l$, contradicting that $l$ (resp. $r$) is unskippable. Thus $l<w<r$ in the vertex order, contradicting that $l$ and $r$ were the closest unskippable vertices to $v$.
\end{proof}

\begin{lem}\label{lem:abafree-addvertex}
	Given an ABA-free hypergraph $\FF$ on vertex set $V$ and a vertex $w$ of $\FF$. Let $\FF'$ be the subhypergraph of $\FF$ induced by the vertex set $V\setminus \{w\}$\footnote{Given a hypergraph $\FF$ on vertex set $V$, the subhypergraph induced by a subset $V'\subset V$ is the hypergraph on vertex set $V'$ with hyperedge set $\{F':F'\subseteq V'$ and $\exists F\in \FF$ s.t. $F'=F\cap V'\}$.}. Let $v$ be an unskippable vertex of $\FF'$, then at least one of $v$ and $w$ is unskippable in $\FF$.
\end{lem}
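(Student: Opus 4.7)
The plan is to argue by contradiction: assume both $v$ and $w$ are skippable in $\FF$, pick $A,B\in\FF$ with $A$ skipping $v$ and $B$ skipping $w$, and extract a forbidden ABA pattern between them.

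First I would pin down the role of $w$ inside $A$. The restricted set $A\setminus\{w\}$ belongs to $\FF'$, and since $v$ is unskippable in $\FF'$ it cannot skip $v$, while $A$ itself does. So removing $w$ must destroy the skipping, which forces $w$ to be the unique element of $A$ on one side of $v$. By the symmetry of the two sides of $v$ I may assume $w<v$, so that $w=\min(A)\in A$, every other element of $A$ lies strictly above $v$, and in particular $A\cap(w,v)=\emptyset$ and $v\notin A$.

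Next I would handle $B$. Since $w\notin B$, we have $B\in\FF'$, so $B$ also does not skip $v$. Combined with $\min(B)<w<v$, this leaves exactly two options: either $v\in B$, or $\max(B)<v$. In both options I would exhibit a forbidden BAB triple centered at the middle vertex $w\in A\setminus B$. The left witness is always $\min(B)$: it lies below $w=\min(A)$, so it is in $B\setminus A$. The right witness is $v$ itself in the first option (using $v\notin A$), and $\max(B)$ in the second option, where $\max(B)\in(w,v)$ together with the interval disjointness $A\cap(w,v)=\emptyset$ from the first step gives $\max(B)\notin A$. Either way, the three vertices $\min(B)<w<(\text{right witness})$ realize the forbidden pattern in the pair $(B,A)$, which is ruled out by the ABA-freeness of $\FF$ after relabeling the two hyperedges in Definition~\ref{def:ABA}.

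I expect the case $\max(B)<v$ to be the main obstacle, since there $v$ does not appear in $B$ at all and one loses it as a pivot for the ABA witness. The rescue is precisely the disjointness $A\cap(w,v)=\emptyset$ extracted in the first step: it automatically places $\max(B)$, which sits in that interval, outside $A$, and so supplies the missing right-hand witness without ever referring to $v$.
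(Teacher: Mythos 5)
Your proof is correct and follows essentially the same route as the paper's: after a mirror-image choice of which side of $v$ the vertex $w$ lies on, both arguments use unskippability of $v$ in $\FF'$ to pin $w$ down as the unique element of the $v$-skipping hyperedge on one side of $v$, and then use unskippability of $v$ again on the $w$-skipping hyperedge to force a forbidden ABA pattern centered at $w$. The only difference is cosmetic: you split into the two cases $v\in B$ versus $\max(B)<v$, while the paper rules out the possible locations of vertices of $H'$ one by one; the witnesses produced are the same.
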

\begin{proof}
	Wlog. suppose that $v<w$. Suppose that in $\FF$ there is a hyperedge $H$ that skips $v$ and a $H'$ hyperedge that skips $w$. 
	Thus $H$ does not contain $v$ and as $v$ is unskippable in $\FF'$, $H$ must contain $w$ and no other vertex bigger than $v$.
	
	Also, $H'$ contains a vertex $q$ bigger than $w$. If $H'$ would contain $v$ then $H$ and $H'$ would form an ABA occurrence on the vertices $v,w,q$. If $H'$ would contain a vertex $r$ betwen $v$ and $w$ then $H$ and $H'$ would form an ABA occurrence on the vertices $r,w,q$ (note that $r\ne w$ is not in $H$ as it is bigger than $v$). Thus $H'$ does not contain $v$ nor a vertex between $v$ and $w$. If $H'$ would contain a vertex smaller than $v$ then $H'$ would skip $v$ in $\FF'$, contradicting our assumption. Altogether, we have shown that $H'$ cannot contain any vertex smaller than $w$ contradicting that it skips $w$.
\end{proof}

Now we extend the definition of unskippable vertices to pseudohalfplane hypergraphs as in \cite{abafree} and call them extremal vertices:
\begin{defi}
	Given a pseudohalfplane hypergraph $\HH$ such that $\HH\subseteq \FF\cup \bar{\FF}$ for some ABA-free hypergraph $\FF$. Call $\TT=\HH\cap \FF$ the topsets and $\BB=\HH\cap \bar{\FF}$ the bottomsets, observe that both $\TT$ and $\BB$ are ABA-free. The unskippable vertices of $\FF$ (resp.\ $\bar{\FF}$) are called topvertices (resp.\ bottomvertices).\footnote{Notice  that the top and bottom vertices depend on $\FF$ and not on $\HH$ itself (and thus without fixing $\FF$ we cannot directly talk about the extremal vertices of $\HH$). For a given $\HH=\TT\cup\BB$ multiple $\FF$'s can witness that it is a pseudohalfplane hypergraph, the smallest valid family is $\TT\cup\bar{\BB}$. Although it is not assumed, yet when reading the paper it is convenient to assume that $\HH=\FF\cup \bar \FF$.} The union of the topvertices and bottomvertices is called the set of \emph{extremal vertices} of $\HH$ and is denoted by $C(\HH).$\footnote{$C(\HH)$ is sometimes abbreviated to $C$ when the underlying hypergraph is clear from the context.} 	
\end{defi}

In the remainder of this section we are always given a pseudohalfplane hypergraph $\cH$ on vertex set $V$ whose extremal vertices are denoted by $C$. First, the following observation provides an equivalent definition of extremal vertices:

\begin{claim}\label{claim:singleton}
	The topvertices are exactly those vertices $v$ for which if we add the singleton hyperedge $\{v\}$ to $\FF$ we still get a pseudohalfplane hypergraph. The bottomvertices are exactly those vertices $v$ for which if we add the singleton hyperedge $\{v\}$ to $\bar{\FF}$ (that is, we add $V\setminus \{v\}$ to $\FF$) we still get a pseudohalfplane hypergraph. 
	
	In other words, the extremal vertices are exactly those that can be separated from the rest of the vertices by a (possibly additional) pseudohalfplane.\footnote{Note that with halfplanes in the plane this wording would give the extreme vertices instead of the vertices that lie on the boundary of the convex hull.}
\end{claim}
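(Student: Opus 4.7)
The plan is to prove the characterization for topvertices; the statement for bottomvertices then follows by applying the same argument to $\bar{\FF}$, which is ABA-free by the fact cited in Definition~\ref{def:pseudo}, noting that adding $\{v\}$ to $\bar{\FF}$ is the same as adding $V\setminus\{v\}$ to $\FF$. The alternative phrasing about being ``separated by a (possibly additional) pseudohalfplane'' is then an immediate reformulation via the geometric realization of pseudohalfplane hypergraphs recalled at the end of Section~\ref{sec:pshpdef}: a new singleton topset or bottomset hyperedge $\{v\}$ corresponds exactly to a (possibly new) pseudohalfplane whose only vertex from $V$ is $v$.

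For the forward direction, assume $v$ is a topvertex, i.e., unskippable in $\FF$. I would show that $\FF':=\FF\cup\{\{v\}\}$ remains ABA-free, so that $\HH\cup\{\{v\}\}\subseteq\FF'\cup\bar{\FF'}$ is still a pseudohalfplane hypergraph. Suppose for contradiction that there exist $A,B\in\FF'$ and vertices $x<y<z$ with $x,z\in A\setminus B$ and $y\in B\setminus A$. Since $\FF$ itself is ABA-free, at least one of $A,B$ must equal the new edge $\{v\}$. It cannot be $A$, because $A$ contains the two distinct vertices $x$ and $z$. So $B=\{v\}$, which forces $y=v$; but then $x,z\in A\in\FF$ with $x<v<z$ and $v\notin A$ says precisely that $A$ skips $v$, contradicting unskippability.

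For the converse, suppose $v$ is not a topvertex, so some $A\in\FF$ skips $v$, meaning $\min(A)<v<\max(A)$ and $v\notin A$. Taking $x=\min(A)$ and $z=\max(A)$, the pair $A,\{v\}$ forms an ABA-configuration on the triple $(x,v,z)$, so $\FF\cup\{\{v\}\}$ is no longer ABA-free and therefore ceases to witness a pseudohalfplane hypergraph in the sense of Definition~\ref{def:pseudo}. The only conceptual point (and essentially the whole ``obstacle'') is observing that a singleton added to an ABA-free family can participate in a new ABA-configuration only as the middle set $B$, and that this happens exactly at skipped positions; the case split above makes this explicit and both directions follow.
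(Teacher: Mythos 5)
Your proof is correct and follows essentially the same route as the paper: the paper's one-line argument is exactly your observation that the added singleton can only play the role of the middle set $B$ in an ABA occurrence, so adding $\{v\}$ to $\FF$ preserves ABA-freeness precisely when $v$ is unskippable in $\FF$, and the bottomvertex case is symmetric via $\bar{\FF}$. Your reading of ``still get a pseudohalfplane hypergraph'' as ``the fixed witness $\FF$ together with $\{v\}$ remains ABA-free'' matches the paper's intended (and stated) interpretation, so there is no gap.
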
 
\begin{proof}
	This follows easily from the definition of unskippability. First, $v$ being a topvertex is by definition equivalent to having no hyperedge in $\FF$ that contains a vertex before and after $v$ but does not contain $v$ which is equivalent to that adding $\{v\}$ to $\FF$ does not introduce two hyperedges in $\FF$ that form an ABA occurrence. 
	
	The part about bottomvertices follows the same way.
\end{proof}

The following observation provides the intuition why we refer to $C$ as the extremal vertices.

\begin{claim}
	If $\HH$ is defined by halfplanes\footnote{That is, $\HH= \FF\cup \bar{\FF}$ where $\FF$ (resp. $\bar{\FF}$) is the family of sets that we get by intersecting all the upwards (resp. downwards) halfplanes with $P$. It is easy to see that this $\FF$ is indeed ABA-free, we refer to \cite{abafree} for further details.} on a point set $P$ then the set of extremal vertices $C(\HH)$ of $\HH$ coincides with the set of points of $P$ that lie on the boundary of the geometric convex hull of $P$.\footnote{Note that this is different from the set of \emph{extreme vertices} which usually denotes the set of vertices of the convex hull (which is a subset of the vertices that lie on the boundary of the convex hull). In case it is assumed that the points are in general position, extremal vertices and extreme vertices coincide and then we can think about the extremal vertices as the vertices of the convex hull, that's why we have chosen a very similar word. For more on this, see Section \ref{sec:extremalvsextreme}.} 
\end{claim}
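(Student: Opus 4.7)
The plan is to unpack the definitions and show directly that the topvertices of $\HH$ coincide with the points of $P$ on the upper boundary of $\mathrm{conv}(P)$, and symmetrically that the bottomvertices coincide with the points on the lower boundary. The claim then follows because $\partial\,\mathrm{conv}(P)\cap P$ is the union of these two sets (they share the leftmost and rightmost points of $P$, which have no vertex sandwiching them in the $x$-order and are therefore vacuously both top- and bottomvertices).

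First I would fix the setup: take $\FF$ to be the family of intersections of upward halfplanes with $P$, as in the excerpt, and order $P$ by $x$-coordinate (wlog the $x$-coordinates are distinct). Being a topvertex then translates to the statement that no upward halfplane $H$ contains points $q,r\in P$ with $q_x<p_x<r_x$ while omitting $p$. For the main equivalence, fix $p\in P$. If $p$ is not on the upper hull, then by definition there exist $q,r\in P$ with $q_x<p_x<r_x$ such that $p$ lies strictly below the line $\ell_{qr}$ through $q$ and $r$; the closed upward halfplane whose boundary line is $\ell_{qr}$ then contains $q$ and $r$ but not $p$, so $p$ is skippable. Conversely, suppose $p$ lies on the upper hull and let $H=\{(x,y):y\ge ax+b\}$ be an upward halfplane containing $q,r\in P$ with $q_x<p_x<r_x$. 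The hypotheses $q_y\ge aq_x+b$ and $r_y\ge ar_x+b$ say that $\ell_{qr}$ lies on or above the boundary line of $H$ at $x=q_x$ and at $x=r_x$; since both are linear in $x$, the same inequality holds on the whole interval $[q_x,r_x]$, so in particular $\ell_{qr}(p_x)\ge ap_x+b$. Combined with $p_y\ge \ell_{qr}(p_x)$ (the defining property of the upper hull) this yields $p\in H$, so $p$ is a topvertex.

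The mirror argument with downward halfplanes shows that the bottomvertices are exactly the lower-hull points of $P$, and combining the two characterisations gives $C(\HH)=\partial\,\mathrm{conv}(P)\cap P$. No step presents a genuine obstacle; the only observation needing a moment's care is that two linear functions weakly ordered at two distinct arguments are weakly ordered on the whole interval between them, which is a one-line argument.
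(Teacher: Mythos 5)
Your proposal is correct and follows essentially the same route as the paper's proof: identify the topvertices with the points on the upper hull (using the hull-edge halfplane to skip non-hull points and the fact that an upward halfplane containing points on both sides of an upper-hull point must contain it), argue symmetrically for bottomvertices and the lower hull, and take the union. You merely spell out the linear-interpolation step that the paper leaves implicit, so no substantive difference.
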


\begin{proof}
	In the rest we refer to points as vertices when we deal with the abstract extremal vertices.
	By definition the topvertices of $\HH$ are exactly those vertices which are unskippable in $\FF$. The points on the geometric upper hull of $P$ have this property, as if a point $p$ is on the upper hull then any upwards halfplane that contains a point both to the left and to the right from $v$ must also contain $v$. On the other hand if a point $q$ is not on the boundary of the geometric upper hull then there is an edge of the hull that goes above $q$ and then the upwards halfplane which contains only the vertices on this convex hull edge skips $q$. Thus topvertices of $\HH$ are exactly the points on the upper hull of $P$.
	
	Similarly, the bottomvertices of $\HH$ are exactly the points on the lower hull of $P$, finishing the proof.
\end{proof}

Now we prove several properties of the (abstract) extremal vertex set which are all generalizations of well-known properties of the set of points that lie on the boundary of the geometric convex hull (we refer to these as the geometric extremal vertices). Most of these properties we will use later, but we also prove some which we do not use later but nevertheless think that they further our understanding of extremal vertices and may be useful in future research.

\begin{obs}
	The leftmost and rightmost vertices are both topvertices and bottomvertices and so they are always extremal vertices. 
\end{obs}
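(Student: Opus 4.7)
The plan is to derive this directly from the definition of skippability, applied separately to $\FF$ and to $\bar{\FF}$. Let $v$ be the leftmost vertex of $V$. For $v$ to be skippable in some ABA-free hypergraph $\mathcal G$ on $V$ we would need a hyperedge $A \in \mathcal G$ with $\min(A) < v < \max(A)$ and $v \notin A$. But the first inequality is impossible because $v$ is the smallest element of $V$, hence no such $A$ exists and $v$ is unskippable in $\mathcal G$. The same argument with the inequality $v < \max(A)$ instead handles the rightmost vertex of $V$.

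Applying this to $\mathcal G = \FF$ shows that both the leftmost and the rightmost vertex of $V$ are topvertices, and applying it to $\mathcal G = \bar{\FF}$ (which is ABA-free by the footnote after Definition \ref{def:pseudo}) shows that they are also bottomvertices. In particular they belong to $C(\HH)$. The only potential subtlety is that skippability is defined relative to $\FF$ (resp.\ $\bar{\FF}$), not to the possibly smaller $\HH$, but the argument is insensitive to which ABA-free family we pick: the property used is purely order-theoretic. Since the statement is essentially an immediate unpacking of the definitions, I expect no genuine obstacle; the only care needed is to invoke the definition for both $\FF$ and $\bar{\FF}$ so that we obtain the topvertex and bottomvertex conclusions simultaneously.
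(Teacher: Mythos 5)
Your proposal is correct and matches the paper's approach: the paper treats this observation as immediate from the definition of skippability (noting earlier that "by definition the leftmost and rightmost vertex is unskippable"), which is exactly the order-theoretic unpacking you give, applied to both $\FF$ and $\bar{\FF}$.
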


\begin{claim}\label{claim:unskippablepshp}
	Every pseudohalfplane contains an extremal vertex.
\end{claim}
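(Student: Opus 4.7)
The plan is to reduce immediately to Lemma \ref{lem:unskippable}, which guarantees that every hyperedge of an ABA-free hypergraph contains an unskippable vertex. By definition we have $\HH \subseteq \FF \cup \bar\FF$ with $\FF$ ABA-free, and the footnote in Definition \ref{def:pseudo} records that $\bar\FF$ is then also ABA-free.

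First I would take any hyperedge $H \in \HH$ and split into the two cases $H \in \TT = \HH \cap \FF$ and $H \in \BB = \HH \cap \bar\FF$. In the first case, since $\FF$ is ABA-free, Lemma \ref{lem:unskippable} gives an unskippable vertex of $\FF$ inside $H$; this is by definition a topvertex of $\HH$ and hence lies in $C(\HH)$. In the second case, $H$ is a hyperedge of the ABA-free hypergraph $\bar\FF$, so Lemma \ref{lem:unskippable} applied to $\bar\FF$ yields an unskippable vertex of $\bar\FF$ inside $H$; this is a bottomvertex of $\HH$, again in $C(\HH)$.

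There is no genuine obstacle: the claim is essentially a packaging of Lemma \ref{lem:unskippable} for the two ABA-free families witnessing that $\HH$ is a pseudohalfplane hypergraph. The only thing worth spelling out is the symmetry between $\FF$ and $\bar\FF$, which is already justified by the footnote to Definition \ref{def:pseudo}, so the proof reduces to one sentence per case.
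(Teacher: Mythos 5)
Your proof is correct and follows exactly the paper's own argument: split a hyperedge $H$ into the topset case ($H\in\FF$) and the bottomset case ($H\in\bar\FF$), apply Lemma \ref{lem:unskippable} to the corresponding ABA-free hypergraph, and note that the resulting unskippable vertex is a topvertex (resp.\ bottomvertex), hence in $C(\HH)$. Nothing is missing.
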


\begin{proof}
	If $H$ is a topset then applying Lemma \ref{lem:unskippable} to $\FF$ we get that $H$ contains an unskippable vertex of $\FF$, which is a topvertex, and so is in $C$. 	If $H$ is a bottomset then applying Lemma \ref{lem:unskippable} to $\bar\FF$ we get that $H$ contains an unskippable vertex of $\bar\FF$, which is a bottomvertex, and so is in $C$.
\end{proof}

\begin{claim}
	If the vertex set has size $n\ge 3$, then the extremal vertex set contains at least $3$ vertices.
\end{claim}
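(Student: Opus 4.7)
The plan is to proceed by induction on $n=|V|$, with the base case $n=3$ handled by a direct ABA argument and the inductive step powered by Lemma \ref{lem:abafree-addvertex}.

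For the base case, write $V=\{l,v,r\}$ with $l<v<r$. By the observation just above, $l$ and $r$ are already extremal, so I need only show that $v$ is extremal as well. Assuming for contradiction that $v$ is skippable in both $\FF$ and $\bar{\FF}$, with only three vertices available the first condition forces some hyperedge $A\in\FF$ skipping $v$ to equal $\{l,r\}$, while the second forces some $B\in\FF$ whose complement skips $v$ to equal $\{v\}$. But then $A,B\in\FF$ form an ABA occurrence on $l<v<r$, contradicting the ABA-freeness of $\FF$.

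For the inductive step, assuming the result for $n-1\ge 3$, I would pick any vertex $w\in V$ that is neither leftmost nor rightmost (possible since $n\ge 4$), form the induced subhypergraph $\HH'$ on $V'=V\setminus\{w\}$ (witnessed by the ABA-free family $\FF'$ obtained by restricting $\FF$ to $V'$), and invoke the inductive hypothesis to obtain three extremal vertices $v_1,v_2,v_3$ of $\HH'$. For each $v_i$, whether it is a topvertex or a bottomvertex of $\HH'$, applying Lemma \ref{lem:abafree-addvertex} to $\FF$ or to $\bar{\FF}$ respectively yields that at least one of $v_i$, $w$ is a topvertex (respectively bottomvertex) of $\HH$, hence extremal in $\HH$.

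A short case analysis closes the argument. If $w$ is not extremal in $\HH$, then each $v_i$ must itself be extremal in $\HH$, furnishing three extremal vertices. If $w$ is extremal in $\HH$, then combining $w$ with the leftmost and rightmost vertices of $V$ — both always extremal and distinct from the middle vertex $w$ — also yields three distinct extremal vertices. The main obstacle is essentially only bookkeeping: verifying that the restriction of a pseudohalfplane hypergraph to a subset of vertices is again a pseudohalfplane hypergraph (which reduces to the easy fact that restrictions of ABA-free hypergraphs are ABA-free), and that Lemma \ref{lem:abafree-addvertex} applies symmetrically to $\bar{\FF}$, which is valid because $\bar{\FF}$ is itself ABA-free (footnote to Definition \ref{def:pseudo}).
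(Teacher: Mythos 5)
Your proof is correct and follows essentially the same route as the paper: induction on $n$, deleting a vertex $w$ that is neither leftmost nor rightmost, and invoking Lemma \ref{lem:abafree-addvertex} (for $\FF$ and $\bar\FF$), with the leftmost and rightmost vertices supplying the remaining extremal vertices. Your explicit ABA argument for the base case $n=3$ and your case split on whether $w$ is extremal are only minor bookkeeping variants of the paper's argument.
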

\begin{proof}
We have seen that the leftmost and rightmost vertex is always in $C$.
Now if we have $n=3$ vertices altogether then it is easy to see that the middle vertex must be a top or bottom vertex or both. If we have $n>3$ vertices then if we delete an arbitrary vertex $w$ which is neither leftmost nor rightmost then by induction in the subhypergraph induced by $V\setminus \{w\}$ there is an extremal vertex $v$ which is neither leftmost nor rightmost, wlog. it is a topvertex. Applying Lemma \ref{lem:abafree-addvertex} we get that in $\FF$ at least one of $w$ and $v$ is a topvertex, thus part of the extremal vertex set, and we are done.
\end{proof}

The next statements are easy consequences of the definition of unskippability:

\begin{obs}[Topvertices in a topset]\label{obs:topintop} If $X$ is a topset and $x,y\in X$, then $X$ contains all topvertices that are between $x$ and $y$. The same holds with bottomvertices if $X$ is a bottomset.
\end{obs}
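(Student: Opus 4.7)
The statement is essentially a direct unpacking of the definition of (un)skippability applied to the ABA-free hypergraphs $\FF$ and $\bar\FF$, so I do not expect any real obstacle; the only thing to be careful about is distinguishing the two cases (topset/topvertex versus bottomset/bottomvertex) since the relevant ABA-free hypergraph differs.

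The plan is to argue by contradiction. Suppose $X$ is a topset and $x,y\in X$ with $x<y$, and let $v$ be a topvertex with $x<v<y$; suppose for contradiction that $v\notin X$. By definition of topset we have $X\in \FF$, so $\min(X)\le x<v<y\le \max(X)$ and $v\notin X$. This is exactly the condition that $X$ skips $v$ in $\FF$, so $v$ is skippable in $\FF$. But $v$ is a topvertex, hence unskippable in $\FF$, a contradiction. Therefore $v\in X$.

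For the second half I would repeat the identical argument with $\FF$ replaced by $\bar\FF$: if $X$ is a bottomset then $X\in \bar\FF$, and the bottomvertices are by definition unskippable in the ABA-free hypergraph $\bar\FF$ (recall that $\bar\FF$ is ABA-free whenever $\FF$ is, as noted in the paper). The same three-vertex argument then yields the conclusion that every bottomvertex strictly between two elements of $X$ must itself lie in $X$.

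Since the argument is literally the definition of skippability, the main (very minor) care is only notational: one must cite the correct ambient ABA-free hypergraph for each of the two parts, and make sure to use $\min(X)\le x$ and $y\le \max(X)$ rather than equalities, so that the strict inequalities $\min(X)<v<\max(X)$ required by the definition hold. No properties beyond Definition~\ref{def:pseudo}, the definition of skippable vertices, and the fact that $\bar\FF$ is ABA-free are needed.
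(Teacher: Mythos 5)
Your proof is correct and is exactly the argument the paper has in mind: the paper states this observation without proof as an ``easy consequence of the definition of unskippability,'' and your contradiction via $X\in\FF$ skipping the topvertex $v$ (and the symmetric argument in $\bar\FF$ for bottomsets) is precisely that unpacking.
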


\begin{obs}[Bottomvertex in a topset]\label{obs:topinbottom} If $X$ is a topset and $x\in X$ is a bottomvertex, then $X$ contains all vertices that are bigger or all vertices that are smaller than $x$. The same holds if $X$ is a bottomset and $x\in X$ is a topvertex.
\end{obs}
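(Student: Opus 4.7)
The plan is to prove the observation by contradiction, leveraging complementation between $\FF$ and $\bar\FF$. Since $X$ is a topset, by definition $X\in\FF$, and hence $\bar X\in\bar\FF$. On the other hand, $x$ being a bottomvertex means precisely that $x$ is unskippable in $\bar\FF$. The whole argument reduces to observing that if $X$ failed the conclusion then $\bar X$ would skip $x$, violating this unskippability.

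First I would suppose, toward a contradiction, that $X$ contains neither all vertices greater than $x$ nor all vertices smaller than $x$. Then there exist vertices $z<x<y$ with $z\notin X$ and $y\notin X$. Since $x\in X$, we have $x\notin \bar X$, while $z,y\in\bar X$. Thus $\bar X\in\bar\FF$ satisfies $\min(\bar X)\le z<x<y\le\max(\bar X)$ and $x\notin \bar X$, which by Definition of skippability means that $\bar X$ skips $x$ in $\bar\FF$. This contradicts the fact that $x$ is a bottomvertex, i.e., unskippable in $\bar\FF$, completing the first half.

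The second half, where $X$ is a bottomset and $x\in X$ is a topvertex, is entirely symmetric: now $X\in\bar\FF$, so $\bar X\in\FF$, and if $X$ missed both some $z<x$ and some $y>x$ then $\bar X$ would skip $x$ in $\FF$, contradicting that $x$ is unskippable in $\FF$. There is no real obstacle here; the only thing to be careful about is applying the right definition of topset/bottomset (namely $\TT=\HH\cap\FF$ and $\BB=\HH\cap\bar\FF$) so that complementation lands us in the appropriate ABA-free family where $x$'s unskippability is available.
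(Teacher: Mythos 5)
Your proof is correct and is exactly the argument the paper has in mind: the observation is stated there without proof as an ``easy consequence of the definition of unskippability,'' and passing to the complement $\bar X\in\bar\FF$ (resp.\ $\bar X\in\FF$) and noting it would skip $x$ is the intended one-line justification. Nothing is missing.
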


Let $T=(t_1=v_1,t_2,\dots t_k=v_n)$ and $B=(b_1=v_1,b_2,\dots b_l=v_n)$ be the sets of top and bottom vertices ordered according to the ordering on $P$. Call $T$ to be the upper hull and $B$ the lower hull. Note that a vertex may appear in both sets. Let us give the following circular order on $C$, the set of extremal vertices: $C=(v_1,t_2,\dots, t_{k-1},v_n, b_{l-1},\dots, b_2)$\footnote{This circular order corresponds to the clockwise order of points on the boundary of the convex hull in the geometric case defined by halfplanes. Also, $T$ corresponds to the upper hull and $B$ to the lower hull in the geometric case.}.

\begin{lem}\label{lem:hullinterval}
	Every pseudohalfplane intersects the extremal vertex set in an interval of the circular order defined on the extremal vertex set.
\end{lem}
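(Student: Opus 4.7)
The plan is to establish the lemma for a topset $H$; the bottomset case follows symmetrically by reversing the roles of $T$ and $B$. I would combine the two structural observations already proved: Observation~\ref{obs:topintop} forces the topvertices lying in $H$ to form a contiguous subinterval $t_i, t_{i+1}, \dots, t_j$ of $T$, while Observation~\ref{obs:topinbottom} forces every bottomvertex $b \in H$ to come with either all vertices smaller than $b$ or all vertices larger than $b$ also lying in $H$; in particular $v_1$ or $v_n$ must then be in $H$.

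Next I would split into four cases according to whether $v_1 = t_1 = b_1$ and $v_n = t_k = b_l$ belong to $H$. If neither is in $H$, then by the second observation no bottomvertex can be in $H$ at all, so $H \cap C = \{t_i, \dots, t_j\}$ with $1 < i \le j < k$ is already a circular arc sitting strictly inside the ``top half'' of $C$. If $v_1 \in H$ but $v_n \notin H$, then $i = 1$ and every bottomvertex $b \in H$ must drag in all smaller vertices (the alternative would place $v_n$ in $H$), so the bottomvertices of $H$ form a prefix $b_1, \dots, b_p$ of $B$; in the circular order $(v_1, t_2, \dots, t_{k-1}, v_n, b_{l-1}, \dots, b_2)$ this is precisely the arc running from $b_p$ through $b_{p-1}, \dots, b_2, v_1, t_2, \dots, t_j$. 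The case $v_n \in H$, $v_1 \notin H$ is symmetric. Finally, if both $v_1$ and $v_n$ lie in $H$, then all of $T$ lies in $H$, and each further bottomvertex of $H$ forces in either a prefix or a suffix of $B$, so the bottomvertices missing from $H$ form a contiguous block $b_{p+1}, \dots, b_{q-1}$ of $B$; in the circular order this missing block is itself a contiguous arc, and the complement of a contiguous arc in a circle is again a contiguous arc, so $H \cap C$ is a circular interval.

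The main obstacle is the last case: one has to recognize that the ``prefix plus suffix'' shape of the bottomvertices in $H$ is exactly the complement in $C$ of a contiguous arc on the bottom side of the circular order, so that $H \cap C$ comes out as a circular interval for free. The other three cases only require gluing a contiguous block of topvertices to a prefix or a suffix of bottomvertices at the anchor vertex $v_1$ or $v_n$, and then reading off the resulting set in the fixed circular order on $C$.
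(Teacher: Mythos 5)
Your proof is correct and follows essentially the same route as the paper's: reduce to topsets by symmetry, use Observations~\ref{obs:topintop} and~\ref{obs:topinbottom} to pin down $H\cap T$ as a contiguous block of $T$ and $H\cap B$ as a prefix and/or suffix of $B$, and glue the two along $v_1$ and $v_n$. Your explicit four-case split on whether $v_1,v_n\in H$, and in particular the passage to the complement when both are present (where $H\cap B$ is a prefix \emph{plus} a suffix rather than a single linear interval of $B$), is a more careful rendering of the paper's terser gluing argument, which states somewhat loosely that $H$ meets $B$ in ``an interval'' anchored at $v_1$ or $v_n$.
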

\begin{proof}
	By symmetry it is enough to prove the statement when the pseudohalfplane $H$ is a topset.
	
	If the pseudohalfplane hyperedge is empty then the claim trivially holds.
	Otherwise, by Observation \ref{obs:topinbottom} $H$ intersects $B$ in an interval that has $v_1$, $v_n$ or both as an endvertex. By Observation \ref{obs:topintop} $H$ intersects $T$ in an interval. As $v_1$ and $v_n$ are also endvertices of $T$, $v_1$, $v_n$ or both (whichever was in $H\cap B$) must be an endpoint of $H\cap T$. Together the two intervals $H\cap B$ and $H\cap T$ form $H\cap C$ which is thus an interval.
%
%
%
\end{proof}

\begin{lem}\label{lem:consecutives}
	If the pseudohalfplane $H\in \cH$ is a topset (resp. bottomset) and contains two bottomvertices (resp. topvertices) $p<q$ that are consecutive in the circular order of the extremal vertices, then $H$ contains every vertex $r$ with $p<r<q$.
\end{lem}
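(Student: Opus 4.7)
By the symmetry between $\FF$ and $\bar\FF$ (swap the roles of topsets/bottomsets and topvertices/bottomvertices), it suffices to treat the case in which $H$ is a topset containing two bottomvertices $p<q$ that are consecutive in the circular order on $C$. First I would unpack what ``consecutive bottomvertices in the circular order'' amounts to: since the circular order is $(v_1,t_2,\dots,t_{k-1},v_n,b_{l-1},\dots,b_2)$, any two bottomvertices that are adjacent in this cyclic list are in fact adjacent in the lower-hull sequence $B=(b_1,\dots,b_l)$. In particular, no bottomvertex lies strictly between $p$ and $q$ in the vertex order on $V$, so for every $r$ with $p<r<q$, $r$ is skippable in $\bar\FF$ and $p,q$ are precisely the two closest unskippable vertices of $\bar\FF$ to $r$.

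The central trick is to pass to the complement. Suppose for contradiction that some $r$ with $p<r<q$ does not lie in $H$. Since $H\in\FF$, its complement $\bar H=V\setminus H$ is by definition a hyperedge of the ABA-free hypergraph $\bar\FF$. By our assumptions, $r\in\bar H$ while $p\notin\bar H$ and $q\notin\bar H$. Applying Lemma \ref{lem:assocunskippable} to $\bar\FF$ then yields an immediate contradiction: $\bar H$ is a hyperedge of $\bar\FF$ that contains the skippable vertex $r$ but neither of the two bottomvertices $p,q$ closest to $r$.

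The bottomset version is the mirror image, obtained by the same argument with $\FF$ and $\bar\FF$ interchanged. I do not expect any serious obstacle in this proof; the only step requiring a moment's care is the combinatorial observation that consecutivity of two bottomvertices in the \emph{circular} order on $C$ forces their consecutivity in the \emph{linear} order on $B$, which is immediate from the explicit description of the circular order. Once this is noted, the lemma reduces to a one-line application of Lemma \ref{lem:assocunskippable} via complementation.
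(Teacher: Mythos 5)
Your proof is correct, but it takes a shorter route than the paper's. The paper first applies Observation \ref{obs:topinbottom} to $H$ with $p$ and with $q$, which either finishes the proof outright (all vertices after $p$, or all before $q$, lie in $H$) or reduces to the case where $H$ contains everything outside the open interval between $p$ and $q$; only then does it pass to the complement, noting that a missing vertex would make $\bar H$ a nonempty hyperedge of $\bar\FF$ contained strictly between $p$ and $q$, so that Lemma \ref{lem:unskippable} yields a bottomvertex strictly between $p$ and $q$, contradicting consecutivity. You skip that case analysis entirely: from a single missing vertex $r$ you get $r\in\bar H\in\bar\FF$ with $p,q\notin\bar H$, and Lemma \ref{lem:assocunskippable}, applied to $\bar\FF$ (whose unskippable vertices are exactly the bottomvertices), immediately forces $\bar H$ to contain $p$ or $q$. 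Since Lemma \ref{lem:assocunskippable} is itself derived from Lemma \ref{lem:unskippable}, both arguments rest on the same mechanism (complementation plus the identification of bottomvertices with the unskippable vertices of $\bar\FF$), but yours is more local --- it never needs to know that $\bar H$ lies entirely between $p$ and $q$ --- and dispenses with Observation \ref{obs:topinbottom} altogether. One caveat: your claim that circular-order adjacency of two bottomvertices forces adjacency in the sequence $B$ is not literally true in degenerate situations (for instance, if there are no interior topvertices, then $v_1$ and $v_n$ are adjacent in the circular order yet need not be adjacent in $B$); but the paper's own proof relies on the same reading of ``consecutive'' (its final contradiction is precisely a bottomvertex strictly between $p$ and $q$), and the lemma is only ever invoked for pairs consecutive among the bottomvertices, so this is an imprecision of the statement shared with the paper rather than a gap specific to your argument.
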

\begin{proof}
	Let $H$ be a topset containing the consecutive  bottomvertices $p<q$ (the other case is symmetrical). Applying Observation \ref{obs:topinbottom} to $H$ and $p$ and to $H$ and $q$ we get that either all vertices $p<r$ or all vertices $r<q$ are contained in $H$ and we are done or all vertices $r$ for which $r<p$ or $p<r$ holds are contained in $H$. We show that in this case $H$ actually contains every vertex. Suppose on the contrary that some vertex $v$, $p<v<q$ is not in $H$. Now $\bar H$ is a non-empty set in $\bar{\FF}$ that contains only vertices between $p$ and $q$. Using Lemma \ref{lem:unskippable} we get that $\bar H$ contains an unskippable vertex of $\bar\FF$, which by definition is a bottomvertex of $\cH$, and lies between $p$ and $q$, a contradiction (as $p$ and $q$ were consecutive in the circular order of the extremal vertices).
\end{proof}

\begin{claim}\label{claim:everyconsecutive}
	If a topset (resp. bottomset) $H\in \cH$ contains every bottomvertex (resp. topset) then it contains every vertex.
	
	If the pseudohalfplane $H\in \cH$ contains every extremal vertex then it contains every vertex.
\end{claim}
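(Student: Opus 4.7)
The plan is to reduce both assertions to a single application of Lemma \ref{lem:consecutives}. For the second assertion, observe that by the definition of a pseudohalfplane hypergraph every hyperedge $H$ is either a topset or a bottomset; if $H$ contains every extremal vertex then in particular it contains every bottomvertex (when $H$ is a topset) or every topvertex (when $H$ is a bottomset), so the first assertion applied to $H$ (or its symmetric counterpart) finishes the argument. Hence the only nontrivial task is the first assertion; by symmetry I would treat only the topset case: a topset $H$ containing every bottomvertex must contain every vertex.

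For this I would exploit the explicit description $C=(v_1,t_2,\dots,t_{k-1},v_n,b_{l-1},\dots,b_2)$ of the circular order on the extremal vertices. In this order the bottomvertices $B=(v_1=b_1,b_2,\dots,b_l=v_n)$ form a single contiguous arc (traversed from $v_n$ back through $b_{l-1},\dots,b_2$ to $v_1$), so for each index $i$ the bottomvertices $b_i$ and $b_{i+1}$ are consecutive in the circular order of $C$: no other extremal vertex, and in particular no topvertex, lies between them in $C$. Applying Lemma \ref{lem:consecutives} to the topset $H$ with $p=b_i$ and $q=b_{i+1}$ then yields that every vertex $r$ with $b_i<r<b_{i+1}$ lies in $H$. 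Since $b_1=v_1$ and $b_l=v_n$, every vertex of $V$ either equals some $b_i$ (in which case it belongs to $H$ by hypothesis) or lies strictly between two consecutive $b_i,b_{i+1}$ (in which case it belongs to $H$ by the argument above), so $H=V$.

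The only delicate point, and essentially the sole obstacle, is noticing that two bottomvertices adjacent in the linear order on $B$ remain consecutive in the circular order on $C$. This is not a definitional tautology but a direct observation about how $C$ was constructed: the internal bottomvertices occupy a single arc $v_n,b_{l-1},\dots,b_2,v_1$ with no topvertex interspersed. Once this is explicitly pointed out, the claim follows immediately by concatenating the conclusions of Lemma \ref{lem:consecutives} over the consecutive pairs of $B$.
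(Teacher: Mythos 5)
Your proposal is correct and is essentially the paper's own argument: the paper also reduces to the topset case and applies Lemma \ref{lem:consecutives} to each pair of consecutive bottomvertices $b_1=v_1,b_2,\dots,b_l=v_n$ to cover all vertices. The extra detail you supply (that bottomvertices adjacent in $B$ remain consecutive in the circular order on $C$, and that the second assertion reduces to the first since every hyperedge is a topset or a bottomset) is a faithful elaboration of steps the paper leaves implicit.
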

\begin{proof}
	Wlog. suppose $H$ is a topset. The other cases follow from this. Applying Lemma \ref{lem:consecutives} on every pair of consecutive bottomvertices ($b_1=v_1$ and $b_2$, $b_2$ and $b_3$,\dots, $b_{l-1}$ and $b_l=v_n$) we get that all vertices are in $H$.
\end{proof}

\begin{lem}\label{lem:twopshpcover}
	Given two extremal vertices $p$ and $q$, there are two intervals on the extremal vertex set's circular order that have these as endpoints. Suppose $H_1\in \cH$ contains one of these intervals and $H_2\in \cH$ contains the other, then $H_1\cup H_2$ contains every vertex.
\end{lem}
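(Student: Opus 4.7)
The plan is to first observe that $H_1\cup H_2\supseteq I_1\cup I_2=C$, so every extremal vertex is already covered; it then remains to show that every non-extremal $v\in V\setminus C$ also lies in $H_1\cup H_2$. For such $v$ I will use the consecutive topvertices $t_i<v<t_{i+1}$ that bracket $v$ and the consecutive bottomvertices $b_j<v<b_{j+1}$ that bracket $v$ (these exist because $v_1,v_n\in C$).

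The key structural input is that the pairs $(t_i,t_{i+1})$ and $(b_{j+1},b_j)$ are adjacent in the circular order on $C$, so for each such pair both vertices must lie together in $I_1$ or together in $I_2$ (since $p,q$ cannot strictly separate a circularly adjacent pair). Combined with Lemma~\ref{lem:consecutives} (a topset containing $b_j,b_{j+1}$ contains $v$; a bottomset containing $t_i,t_{i+1}$ contains $v$), this settles almost every type combination of $(H_1,H_2)$: whenever some $H_k$ is a topset whose arc $I_k$ contains the bottom edge, or a bottomset whose arc $I_k$ contains the top edge, Lemma~\ref{lem:consecutives} immediately gives $v\in H_k$. Running through the four type combinations shows that the only configuration not covered is, WLOG, $H_1$ a topset, $H_2$ a bottomset, with $\{t_i,t_{i+1}\}\subseteq I_1$ but not $I_2$ and $\{b_j,b_{j+1}\}\subseteq I_2$ but not $I_1$.

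In this remaining case I will first show that $p$ and $q$ must sit on opposite sides of $v$ in the linear order. The point is that in the circular order $(v_1,t_2,\dots,t_{k-1},v_n,b_{l-1},\dots,b_2)$ the extremal vertices with linear index $>v$ form a single circular arc $t_{i+1},\dots,v_n,\dots,b_{j+1}$ and those with linear index $<v$ form the complementary arc $b_j,\dots,v_1,\dots,t_i$; the two arcs meet precisely at the top edge and at the bottom edge. Requiring $I_1$ to contain the top edge alone and $I_2$ to contain the bottom edge alone therefore forces $p$ and $q$ to lie one in each of these two arcs. So, WLOG, $p<v<q$.

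The finish is an ABA-free contradiction. Fixing an ABA-free family $\FF$ with $\HH\subseteq \FF\cup \bar\FF$, the topset $H_1$ lies in $\FF$ and the complement $\bar H_2$ of the bottomset $H_2$ also lies in $\FF$. Since $p,q\in I_1\cap I_2\subseteq H_1\cap H_2$, we have $p,q\in H_1\setminus \bar H_2$. Assuming for contradiction that $v\notin H_1\cup H_2$ gives $v\in \bar H_2\setminus H_1$, and then the triple $p<v<q$ realizes an ABA pattern on the pair $(H_1,\bar H_2)$, contradicting that $\FF$ is ABA-free. The hard part is the circular-order bookkeeping that pins down the ``two gaps'' picture in this remaining case; once that is in place, Lemma~\ref{lem:consecutives} and a single ABA pattern do all the work.
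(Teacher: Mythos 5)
Your proof is correct and rests on exactly the two ingredients the paper uses: Lemma~\ref{lem:consecutives} applied to the consecutive top- or bottomvertices bracketing a non-extremal vertex, and a single ABA pattern on $H_1$ and $\bar H_2$ (both in $\FF$) for a vertex lying strictly between $p$ and $q$. The paper merely organizes the case split differently --- by the position of the vertex relative to $p$ and $q$ rather than by which interval captures which bracketing edge --- and your version in fact spells out the circular-order bookkeeping (adjacent pairs of $C$ cannot be split by $\{p,q\}$; the extremal vertices on either side of $v$ form complementary arcs) that the paper leaves implicit.
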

\begin{proof}
	Wlog. $p<q$. We can suppose that one of $H_1$ and $H_2$ is a topset and the other is a bottomset. Indeed, if they are of the same type, wlog. topsets, then we can apply Lemma \ref{lem:consecutives} on every pair of consecutive bottomvertices ($b_1=v_1$ and $b_2$, $b_2$ and $b_3$,\dots, $b_l-1$,$b_l=v_n$) with $H_1$ or $H_2$ to get that all vertices are in $H_1$ or $H_2$.

	Thus wlog. $H_1$ is a topset and $H_2$ is a bottomset.

	First, for every vertex $r$ with $p<r<q$, $r$ must be in $H_1$ or $H_2$. Indeed, otherwise $H_1$ and $\bar H_2$ are two hyperedges of $\FF$ forming an ABA-occurrence on $p,q,r$, contradicting that $\FF$ is ABA-free.
	
	Second, by the assumption of the lemma, for every vertex $r<p$ which is not an extremal vertex, either for $H=H_1$ or $H=H_2$ it is true that $H$ contains the topvertices and bottomvertices right before and after $r$ in the order of the vertices (altogether at most $4$ vertices). Depending on $H$ (if it is a topset or bottomset), we can apply \ref{claim:everyconsecutive} on two of these vertices (the two topvertices or the two bottomvertices) to conclude that $r$ is in $H$. Symmetrical argument shows that every vertex $r>q$ is also in $H_1$ or $H_2$, finishing the proof.	
\end{proof}

\begin{lem}\label{lem:twopshpdisjoint}
	Given two non-consecutive extremal vertices (in the circular order) $p$ and $q$, $C\setminus\{p,q\}$ is the union of two intervals of the circular order of $C$. Suppose $H_1$ and $H_2$ are pseudohalfplanes such that $H_1\cap C$ is a subset of one of the intervals and $H_2\cap C$ is a subset of the other interval. Then $H_1\cap H_2=\emptyset$.
\end{lem}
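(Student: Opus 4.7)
My plan is to reduce the statement to Lemma~\ref{lem:twopshpcover} by passing to complements.

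Set $\HH' := \FF \cup \bar\FF$. This $\HH'$ is itself a pseudohalfplane hypergraph, witnessed by the same ABA-free family $\FF$, so its extremal vertex set coincides with $C$, and it is closed under complementation since $\bar{\bar\FF} = \FF$. (Under the convenient convention $\HH = \FF \cup \bar\FF$ mentioned in the paper, $\HH' = \HH$ and no enlargement is needed.) Because $H_1, H_2 \in \HH \subseteq \HH'$, we also have $\bar{H_1}, \bar{H_2} \in \HH'$, and every earlier lemma applies inside $\HH'$ with the same $C$, $p$, $q$, $I_1$, $I_2$.

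Next I would verify the hypothesis of Lemma~\ref{lem:twopshpcover} for these complements. From $H_1 \cap C \subseteq I_1$ we obtain
\[
\bar{H_1} \cap C \;=\; C \setminus (H_1 \cap C) \;\supseteq\; C \setminus I_1 \;=\; \{p,q\} \cup I_2,
\]
so the hyperedge $\bar{H_1}$ contains the entire interval of the circular order of $C$ with endpoints $p,q$ that passes through $I_2$. Symmetrically, $\bar{H_2}$ contains the other such interval, namely $\{p,q\} \cup I_1$.

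Finally, applying Lemma~\ref{lem:twopshpcover} inside $\HH'$ to $\bar{H_1}, \bar{H_2}$ and the pair $p,q$ yields $\bar{H_1} \cup \bar{H_2} = V$, which is precisely $H_1 \cap H_2 = \emptyset$, as claimed.

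The one thing that warrants a careful look is the legitimacy of invoking Lemma~\ref{lem:twopshpcover} inside the enlarged hypergraph $\HH'$ rather than $\HH$; this is immediate because both the statement and the proof of that lemma depend only on the ambient ABA-free family $\FF$ and the extremal vertex structure it induces, and these are unchanged when $\HH$ is replaced by $\HH'$. Beyond this small bookkeeping step, no genuine obstacle arises.
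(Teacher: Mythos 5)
Your proof is correct and follows essentially the same route as the paper's: pass to the complements $\bar{H_1},\bar{H_2}$ (which lie in $\FF\cup\bar\FF$, so the extremal-vertex machinery and Lemma~\ref{lem:twopshpcover} still apply), check that they contain the two intervals with endpoints $p,q$, and conclude $\bar{H_1}\cup\bar{H_2}=V$, i.e.\ $H_1\cap H_2=\emptyset$. Your extra bookkeeping about working in $\HH'=\FF\cup\bar\FF$ just makes explicit a step the paper leaves implicit.
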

\begin{proof}
	Notice that $\bar{H_1}$ and $\bar{H_2}$ are also in $\FF\cup \bar{\FF}$ and we can apply Lemma \ref{lem:twopshpcover} on them with the same $p$ and $q$. This implies that $\bar{H_1}\cup\bar{H_2}$ covers every vertex, which in turn implies that $H_1\cap H_2=\emptyset$.
\end{proof}

\subsection{Extremal vertices versus convex hull vertices}\label{sec:extremalvsextreme}

One might find it surprising that extremal vertices generalize points on the boundary of the convex hull and we do not have a definition that generalizes the notion of convex hull vertices. We argue why this cannot be done in a useful way (or how it can be done if needs be). First, if we could define the set of convex hull vertices of some pseudohalfplane hypergraph $\HH$ on vertex set $S$ in some way (and denote it by $CV$), then for this definition for being useful we would (arguably) need the following properties to hold:
\begin{enumerate}
	\item $CV\subseteq C$, that is, every convex hull vertex is an extremal vertex,
	\item $CV$ hits every hyperedge,
	\item if $\HH$ is defined on a point set by all halfplanes then $CV$ should be equal to the set of the geometric convex hull vertices,
	\item if $\HH\subset \HH'$ then $CV(\HH)\subseteq CV(\HH')$, that is, if a vertex is in $CV$ then it should remain in $CV$ if we add further hyperedges to $\HH$.
\end{enumerate}

Our primary aim is to generalize the geometric notion, so the third property is natural.
Also, the first two properties are quite natural to assume as these are true in the geometric setting as well. The fourth one is less evident but we think that it would be needed so that the definition is useful in practice. Namely, we know that $C$ hits every hyperedge and so we can take any minimal subset $CV$ of $C$ which hits every hyperedge (this is a minimal hitting set) and it is easy to see that this has the first three properties. On the other hand this is not well-defined. Indeed, take a halfplane hypergraph defined on a set of points $P$ in convex position and replace each vertex by a pair of vertices, we get a pseudohalfplane hypergraph $\HH_0$. Then any subset of vertices which has exactly one from each pair of vertices has the first two properties and the third does not apply to $\HH_0$. Which of these subsets should we choose as $CV$? We could,  e.g., choose the lexicographically smallest such set as $CV$ and then it is well-defined but seems to be quite arbitrary, we do not think that any other choice would be more natural. In particular the fourth property fails as for any deterministic choice of $CV$ in $\HH_0$ we can extend the hypergraph such that it is a halfplane hypergraph and exactly the other vertex from each pair becomes a convex hull vertex, thus we get a $\HH\subset \HH'$ such that $CV(\HH)\cap CV(\HH')=\emptyset$. 

However, there is one possible way to make the fourth property hold: if all minimal hitting subsets that are subsets of $C$ are considered to be a possible set of convex hull vertices. Indeed, each member of the family $\cal CV$ of such minimal hitting sets has the first three properties and $\cal CV$ also has the fourth property in the following sense: if $\HH\subset \HH'$ then if $CV'\in {{\cal CV}(\HH')}$, then there exists $CV\in {{\cal CV}(\HH)}$ s.t. $CV\subseteq CV'$. Indeed, $CV'$ being a hitting set of $\HH'$ implies that $CV'$ is also a hitting set of $\HH$ and so $CV'$ contains a minimal hitting set $CV$ of $\HH$. We also need that $CV\subseteq C(\HH)$ which holds as it is easy to see that by definition for the sets of extremal vertices we have $C(\HH')\subseteq C(\HH)$ which implies $CV\subseteq CV'\subseteq C(\HH')\subseteq C(\HH)$.

Nevertheless, unlike extremal vertices, we did not find such a notion of convex hull vertices useful for our purposes.

Finally, we remark that if we restrict our attention to maximal pseudohalfplane hypergraphs, that is, to hypergraphs to which no further hyperedge can be added without ruining the property that it is a pseudohalfplane hypergraph (with the given order of vertices), then for a maximal $\HH$ on vertex set $S$ we know that its extremal vertices are exactly those vertices $v$ for which $\{v\}$ and also $S\setminus \{v\}$ is a hyperedge of $\HH$ and so the unique minimal hitting set inside $C$ is $C$ itself and so the two notions (of extremal and extreme vertices) coincide.

\section{Proofs of discrete Helly-type theorems}\label{sec:proofpshp}

\begin{proof}[Proof of Theorem \ref{thm:primalaba}]
	We take a minimal hitting set $R$ of unskippable vertices, that is, $R$ is containment minimal for the property that it intersects every hyperedge. This exists as by Lemma \ref{lem:unskippable} every hyperedge contains an unskippable vertex. We show that $R$ has at most two vertices. Assume on the contrary that there exist $a<b<c$, all in $R$. As $R$ is minimal, there exist $A,B,C$ such that $a\in A$, $b\in B$, $c\in C$ are the only containments between these three vertices and three hyperedges. By the assumption there exists a vertex $s\in A\cap C$, wlog. $s<b$. Thus, $C$ contains $s$ and $c$ while it does not contain $b$, that is, $C$ skips $b$, contradicting that $b$ is unskippable.
\end{proof}

\begin{proof}[Proof of Theorem \ref{thm:primalpshp}]
	Given a family of pseudohalfplanes s.t. every pair of them intersects we need to find $3$ vertices that hit every pseudohalfplane.
	
	We take a minimal hitting set $R$ of extremal vertices (i.e., vertices of $C$), that is, $R$ is containment minimal for the property that it intersects every pseudohalfplane. This exists as by Claim \ref{claim:unskippablepshp} every hyperedge contains an extremal vertex. We claim that it contains at most $3$ vertices. Assume on the contrary that it contains at least $4$ different vertices, $p,q,r,s$, which appear in this order in the circular order. Now by the minimality of $S$ there exists a pseudohalfplane $H_1$ s.t. $p\in H_1$ yet $q,r,s\notin H_1$ and also a pseudohalfplane $H_2$ s.t. $r\in H_2$ yet $p,q,s\notin H_2$. We can apply Lemma \ref{lem:twopshpdisjoint} on $H_1,H_2,q,s$ to conclude that $H_1\cap H_2=\emptyset$, contradicting our assumption.	
\end{proof}

\begin{proof}[Proof of Theorem \ref{thm:dualpseudohp32}]
Our proof follows the steps of the proof of Theorem \ref{thm:dualhp1} \cite{jjr}, however, the geometric arguments used in \cite{jjr} need to be replaced by abstract counterparts. 

Consider a pseudohalfplane $H_1\in H$ that contains the largest number of vertices from $C$, the set of extremal vertices. 
If $H_1$ contains all vertices of $C$ then by Claim \ref{claim:everyconsecutive} $H_1$ contains all vertices, we are done.

Assume now that $H_1$ does not contain all vertices of $C$. By Lemma \ref{lem:hullinterval} $H_1$ intersects $C$ in an interval $I_1$ of the circular order defined on $C$. Let $p$ and $q$ be the two endvertices of this interval in the circular order of the extremal vertices ($p=q$ is possible). Let $r$ be an arbitrary point of $C\setminus H_1$. Using the assumption of the theorem there exists a pseudohalfplane $H_2$ that contains all of $p,q,r$ (if $p=q$ then we apply the assumption to $p,r$ and an arbitrary third vertex, which exists as $n\ge 3$). As $H_2\cap C$ is an interval $I_2$ containing $p$ and $q$, $I_2$ either contains $I_1$ plus also $r$, contradicting the maximality of $|H_1\cap C|$; or it contains $p$, $q$ and the interval $C\setminus I_1$. In this case by Lemma \ref{lem:twopshpcover} we get that $H_1$ and $H_2$ together cover every vertex, finishing the proof.
\end{proof}

We need the following statement, which is proved as part of the proof of Theorem 4.6 of \cite{abafree}:

\begin{lem}\cite{abafree}\label{lem:implicit}
	In a pseudohalfplane (resp. pseudohemisphere) hypergraph $\HH$ either there are $3$ (resp. $4$) hyperedges covering every vertex of $\HH$ or $\HH$ is the dual of a pseudohalfplane hypergraph.
\end{lem}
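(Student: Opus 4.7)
For the pseudohalfplane case, the plan is to establish a dichotomy by working with the circular order on the extremal vertex set $C$ and using that each hyperedge meets $C$ in an arc (Lemma \ref{lem:hullinterval}). I would first pick a topset $H_T$ maximizing $|H_T\cap C|$ and a bottomset $H_B$ maximizing $|H_B\cap C|$. If $H_T\cup H_B\supseteq C$, Lemma \ref{lem:twopshpcover} applied to the two endpoints of these two arcs already gives $H_T\cup H_B=V$ with just two sets; otherwise I would try to pick a third hyperedge $H_3$ so that the three arcs $H_i\cap C$ together cover $C$ in a ``triangular'' configuration whose adjacent pairs meet at extremal vertices. In that case, iterated application of Lemma \ref{lem:consecutives} and Claim \ref{claim:everyconsecutive} to the extremal vertices between adjacent arcs propagates the cover from $C$ to all of $V$.

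The core and the main obstacle is the case in which no third hyperedge completes such a cover of $C$. Here every hyperedge $H\in\HH$ has $H\cap C$ constrained to avoid a specific ``bad'' extremal vertex $w$, so the hyperedges admit a natural linear order---say, by the clockwise position on $C$ of the leftmost extremal vertex each one contains, starting from $w$. An ABA-violation among three hyperedges in this order would reconstruct exactly the triangular configuration we just excluded; hence the order realizes $\HH$, with vertices and hyperedges exchanged, as a pseudohalfplane hypergraph, and $\HH$ is the dual of a pseudohalfplane hypergraph. Carefully matching the combinatorics of ``no 3-cover of $C$'' to ABA-freeness of the dual order (and of its complement, to handle both topsets and bottomsets) is the heavy bookkeeping that I expect to occupy most of the proof.

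For the pseudohemisphere case $\HH=\{F\Delta X,\bar F\Delta X:F\in\FF\}$, the $X$-flip splits each of the topsets and bottomsets into two classes based on membership in $X$, which roughly doubles the catalog of canonical hyperedges and so raises the needed number of covers from three to four. The dichotomy argument then runs in parallel: either four hyperedges of the above form cover $V$ (found by choosing two ``maximal'' topsets and two ``maximal'' bottomsets, one of each $X$-polarity), or the same rigidity argument produces a dual pseudohalfplane structure on $\HH$, with the $X$-flip absorbed into the order on hyperedges; the conclusion is still that $\HH$ is the dual of a pseudohalfplane hypergraph, matching the statement of the lemma.
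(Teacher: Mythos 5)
This lemma is not proved in the paper at all: it is imported verbatim from the proof of Theorem~4.6 of \cite{abafree}, so your attempt can only be judged on its own merits, and as it stands it has two genuine gaps. The first is that the covering half of your dichotomy is unsound: traces covering $C$ do not propagate to a cover of $V$. Lemma~\ref{lem:twopshpcover} needs two extremal vertices $p,q$ lying in \emph{both} hyperedges (the two closed intervals they bound), and Lemma~\ref{lem:consecutives} only lets a topset absorb the vertices between two consecutive \emph{bottom}vertices that it contains (and symmetrically). Concretely, take four points in convex position plus the center: an upwards halfplane containing the two upper points and a downwards halfplane containing the two lower points cover $C$ but miss the center, so your very first step ($H_T\cup H_B\supseteq C$ implies $H_T\cup H_B=V$) fails. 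Likewise, six points in convex position plus the center, with three halfplanes each containing three consecutive hull points so that adjacent arcs share an extremal vertex, give exactly your ``triangular configuration'': the three arcs cover $C$, yet the center lies in no hyperedge. In both examples the lemma is true only because the hypergraph happens to be a dual pseudohalfplane hypergraph, i.e., via the branch your argument would never enter; your argument would wrongly report a small cover of $V$.

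The second gap is that the branch which \emph{is} the content of the lemma --- if no $3$ (resp.\ $4$) hyperedges cover $V$, then $\HH$ is the dual of a pseudohalfplane hypergraph --- is only gestured at. The assertion that absence of a $3$-cover of $C$ forces every trace $H\cap C$ to avoid one common ``bad'' extremal vertex $w$ is not justified (and covering $C$ is not even the relevant condition, as the examples above show), and ``an ABA-violation would reconstruct the excluded triangular configuration'' is not a proof: to exhibit $\HH$ as a dual pseudohalfplane hypergraph you must produce an order on the hyperedges together with an ABA-free family $\FF^*$ such that every vertex-link $\{H\in\HH: v\in H\}$ lies in $\FF^*\cup\bar{\FF^*}$, decide which links go on which side, and handle vertices contained in no hyperedge or in all of them; you explicitly defer exactly this as ``heavy bookkeeping''. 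The pseudohemisphere half inherits all of these issues and in addition presupposes an extremal-vertex calculus for hypergraphs of the form $\{F\Delta X,\bar F\Delta X : F\in\FF\}$ that neither this paper nor your sketch provides. If you want an actual proof rather than a plan, you need to work through the argument inside the proof of Theorem~4.6 of \cite{abafree}, which is where the paper takes this statement from.
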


\begin{proof}[Proof of Theorem \ref{thm:dualpseudohp23}]
 Applying Lemma \ref{lem:implicit} to our $\HH$, in the former case we are done and in the latter case, we can apply Theorem \ref{thm:primalpshp} to the dual $\HH'$ of $\HH$, which is a pseudohalfplane hypergraph, to conclude that $3$ vertices of $\HH'$ hit every hyperedge of $\HH'$, which is equivalent to saying that $3$ hyperedges of $\HH$ cover the vertices of $\HH$.
\end{proof}

\begin{proof}[Proof of Theorem \ref{thm:dualpseudohsp24}]
	  Applying Lemma \ref{lem:implicit} to our $\HH$, in the former case we are done and in the latter case, we can apply Theorem \ref{thm:primalpshp} to the dual $\HH'$ of $\HH$, which is a pseudohalfplane hypergraph, to conclude that already $3$ vertices of $\HH'$ hit every hyperedge of $\HH'$, which is equivalent to saying that $3$ hyperedges of $\HH$ cover the vertices of $\HH$.
\end{proof}

\begin{proof}[Proof of Theorem \ref{thm:primalpshp32}]
	The proof follows a similar argument about halfplanes present in \cite{jjr}.	
	Given a pseudohalfplane hypergraph $\HH$ such that every triple of hyperedges has a common vertex, we need to prove that there exists a set of at most $2$  vertices that hits every hyperedge of $\HH$. If $n\le 2$ then we can take all vertices. Similarly, if we have at most two hyperedges then we can take a vertex from each of them and we are done. Otherwise, take the complement of every hyperedge to get the pseudohalfplane hypergraph $\bar{\HH}$. Assume on the contrary that no two vertices hit every hyperedge in $\HH$, then in $\bar{\HH}$ for every pair of points there is a hyperedge that contains both of them. Thus we can apply Theorem \ref{thm:dualpseudohp23} to conclude that there are $3$ hyperedges of $\bar{\HH}$ that together cover all the vertices. This means that there are $3$ hyperedges of $\HH$ that have no common vertex, contradicting our initial assumption. 
\end{proof}

\bigskip
To complement our upper bounds we show matching lower bound constructions. 

First we give some trivial examples showing that Theorem \ref{thm:primalaba} is optimal in every sense. We show that for ABA-free hypergraphs primal (and due to self-duality also dual) discrete Helly is not true with $k\rightarrow 1$ nor with $1\rightarrow k$ for any $k$. For the first, take a base set of size $l\ge k+1$ (with an arbitrary ordering) and take all size $l-1$ sets as hyperedges, this is $ABA$-free, every subfamily of $k$ hyperedges intersects in a vertex, yet there is no vertex which hits every hyperedge. For the second, take $k+1$ disjoint hyperedges of arbitrary size after each other, this is $ABA$-free, every hyperedge can be hit by a vertex (this property holds trivially for every hypergraph) yet no $k$ vertices hit all hyperedges.

These two constructions also imply that $1\rightarrow k$ and $k\rightarrow 1$ cannot hold for any $k$ for both the primal and dual case for pseudohalfplanes.

We have seen that primal discrete Helly holds with $2\rightarrow 3$ and with $3\rightarrow 2$. The following simple construction is a modification of a construction for halfplanes from \cite{jjr} and shows that it does not hold with $2\rightarrow 2$. Take $3k$ vertices $[0,3k-1]$ and for each $i\in [0,2]$ and $j\in [0,k-1]$ we take the set of size $k+1$ containing vertices $(ik,ik+1,\dots,ik+k-1)$ plus the vertex $(i+1)k+j$ as a hyperedge (indices are modulo $3k$). This hypergraph $\cH_0$ is easy to realize with halfplanes (see \cite{jjr}) and thus it is also a pseudohalfplane hypergraph. In this hypergraph every pair of hyperedges intersects yet no two vertices hit all the hyperedges. 

We have also seen that dual discrete Helly holds with $2\rightarrow 3$ and with $3\rightarrow 2$. The same construction as in the primal case shows that with $2\rightarrow 2$ it is not true. Indeed, in $\cH_0$ every pair of vertices is contained in some hyperedge yet no two hyperedges cover all vertices.

Thus for pseudohalfplanes we have covered all possible cases both in the primal and dual setting. 

For pseudohemisphere hypergraphs (for which the primal and dual cases are equivalent) we proved only that $4\rightarrow 2$ which leaves open several problems, we omit to list them all.

\section{Pseudohalfplanes versus halfplanes}\label{sec:vs}

Having spent this much effort to prove results about pseudohalfplane hypergraphs that are mostly already known for halfplane hypergraphs, it is worthwhile to investigate by what extent is the former a larger family compared to the latter. It is known that there are (simple) pseudoline arrangements that are non-stretchable already with $9$ pseudolines (based on the Pappus configuration), that is, which cannot be realized with line arrangements, moreover, almost all of them are such (see, e.g., Chapter 5 of \cite{handbook} by Felsner and Goodman). This suggests that pseudohalfplane hypergraphs are a much richer family than halfplane hypergraphs, but it might not be immediately obvious if there is a direct connection as arrangements encode geometric realizations while hypergraphs are strictly combinatorial structures. The aim of this section is to prove that the implication does hold. 

One can regard a pseudoline arrangement as a plane graph: the vertices of an arrangement of pseudolines are the intersection points of the pseudolines, the edges are the maximal connected parts of the pseudolines that do not contain a vertex and the faces are the maximal connected parts of the plane which are disjoint from the edges and the vertices of the arrangement.\footnote{The vertices of an arrangement should not be confused with the vertices of a hypergraph.}
We say that two pseudoline arrangements are (combinatorially) {\em equivalent} if there is a one-to-one adjacency-preserving correspondence between their pseudolines, vertices, edges and faces. We need the following:

\begin{thm}\label{thm:unique}
	Given a simple pseudoline arrangement $\AA$, let $P$ be a set of points which has exactly one point in each face of $\AA$. Let $\HH$ be a pseudohalfplane hypergraph whose vertex set is $P$ and for each pseudoline of $\AA$ it has a hyperedge which contains the points on one side of this pseudoline.\footnote{For each pseudoline we can choose the side arbitrarily. If for every pseudoline we choose the side above the pseudoline then $\HH$ is also ABA-free.} Then in every realization of $\HH$ with pseudohalfplanes the arrangement of the boundary pseudolines is equivalent to $\AA$.
\end{thm}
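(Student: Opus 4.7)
The plan is to attach to every point of $P$ its \emph{sign vector} in $\{+,-\}^n$ (where $n$ is the number of pseudolines of $\AA$), which records on which side of each pseudoline the point lies, and then to argue that this data is already determined by $\HH$ and pins down the cell complex of the arrangement. If we index the pseudolines of $\AA$ and of any realization $\AA'$ of $\HH$ by the same hyperedges, then for every $p\in P$ the condition ``$p$ is on the chosen side of the $i$-th pseudoline'' translates in both $\AA$ and $\AA'$ to ``$p$ belongs to the $i$-th hyperedge of $\HH$''. Hence $p$ has the same sign vector in $\AA$ and in $\AA'$.

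The main counting step comes next. Since $\AA$ is simple and every pair of its pseudolines crosses, it has exactly $f(n):=1+n+\binom{n}{2}$ faces, each carrying a distinct sign vector; because $P$ contains one point per face, $|P|=f(n)$ and the sign vectors of $P$ are precisely the face sign vectors of $\AA$. In $\AA'$, points with different sign vectors must occupy different faces, so $\AA'$ has at least $f(n)$ faces. On the other hand, any arrangement of $n$ pseudolines in the plane has at most $f(n)$ faces, with equality iff it is simple and every pair of pseudolines crosses (a standard consequence of Euler's formula applied to the one-point compactification: each missing crossing and each three-fold concurrence strictly decreases the face count). Pinched between the two bounds, $\AA'$ must itself be simple with all pairs crossing, have exactly $f(n)$ faces, each containing exactly one point of $P$, and its set of face sign vectors must coincide with that of $\AA$.

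Finally, in a simple arrangement of pairwise crossing pseudolines the whole cell complex can be reconstructed from its set of face sign vectors: faces correspond to these vectors; edges correspond to vectors obtained from a face vector by turning exactly one coordinate into $0$ (and lie on the correspondingly indexed pseudoline, between the two faces differing in that coordinate); vertices correspond to vectors with exactly two zero coordinates; and the $i$-th pseudoline is the union of all edges with $i$-th coordinate $0$. Applying this common recipe to both $\AA$ and $\AA'$ produces the required adjacency-preserving correspondence between their pseudolines, vertices, edges and faces. The main obstacle I expect is the upper bound $|F(\AA')|\le f(n)$ together with its equality characterisation, since a priori $\AA'$ is only known to be a (possibly loose, possibly non-simple) pseudohalfplane realization of $\HH$; the sign-vector pigeonhole forces it into the extremal case, and from there the cell complex is pinned down with no further geometric input.
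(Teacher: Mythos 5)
Your argument is correct in outline, but it takes a genuinely different route from the paper. The paper proves Theorem \ref{thm:unique} by an insertion induction: it adds the boundary pseudolines of an arbitrary realization one at a time and shows, via a local topological argument about ``active'' faces, that each insertion is combinatorially forced to agree with $\AA$. You argue globally instead: the hyperedge-membership vector of each point of $P$ coincides with its side-vector both in $\AA$ and in any realization $\AA'$; since $\AA$ is simple and non-loose it has exactly $1+n+\binom{n}{2}$ faces with pairwise distinct sign vectors, so $\AA'$ has at least that many faces, while any arrangement of $n$ curves pairwise crossing at most once has at most that many, with equality only in the simple, pairwise-crossing case (your Euler/insertion count is fine; note each face is simply connected because the curves are bi-infinite, so each new arc splits exactly one face in two). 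This pins $\AA'$ to be simple and non-loose with the same tope set as $\AA$, after which the cell complex is recovered from the topes. What your approach buys: it is shorter, avoids the induction, and explicitly forces any a priori loose or non-simple realization into the extremal case. What it costs: it leans on facts that must be proved or cited to stay self-contained, namely (i) distinct faces of a pseudoline arrangement have distinct sign vectors (needed already for $\AA$, and again for subarrangements when justifying the reconstruction) --- true, and quickly provable from the $x$-monotone normal form the paper already invokes --- and (ii) the reconstruction of the cell complex from the tope set. Moreover your recipe in (ii) is stated too loosely: zeroing one coordinate of a face vector need not give an edge (for three pseudolines forming a triangle, $(+,-,-)$ is a face but $(0,-,-)$ is not a cell); the correct statement is that edges are those once-zeroed vectors \emph{both} of whose sign completions are face vectors, and vertices are the twice-zeroed vectors all four of whose completions are face vectors, after which the incidences you list do hold. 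Two further implicit assumptions are shared with the paper's own proof and are harmless but worth a sentence: the hyperedges are pairwise distinct, so pseudolines of $\AA'$ can indeed be indexed by those of $\AA$, and no point of $P$ lies on a boundary pseudoline of the realization (otherwise it lies in no face and the pigeonhole count must be preceded by a small perturbation).
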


\begin{proof}
We claim that in any realization of $\cH$ by pseudohalfplanes, the arrangement of the boundary pseudolines is equivalent to $\AA$. To see this, take the arrangement $\AA'$ of the boundary pseudolines of any realization. In it we take the pseudolines in an arbitrary order: $l'_1,l'_2,\dots, l'_m$. The pseudolines in $\AA$ that are the boundaries of the hyperedges in the same order is denoted by $l_1,l_2,\dots, l_m$. Denote $\AA_{i}$ (resp. $\AA'_{i}$) the arrangement defined by the first $i$ boundary pseudolines of $\AA$ (resp. $\AA'$). 

The sub-arrangement $\AA'_1$ of $\AA'$ defined by $l'_1$ is unique (has two infinite faces with a bi-infinite curve separating them), and trivially equivalent to $\AA_1$. Assume now that for some $i$ we already know that $\AA'_{i-1}$ is equivalent to $\AA_{i-1}$ (and so we can identify their faces). We shall show that this holds also for $i$ instead of $i-1$. 

To see this we add $l_i'$, the $i$th pseudoline to $\AA'_{i-1}$.
The faces of the arrangement $\AA'_{i-1}$ partition the vertex set. Clearly, $l'_i$ must cross those faces of $\AA'_{i-1}$ for which the corresponding part of the partition is non-trivially intersected by the respective $i$th hyperedge (that is, the intersection of that part with the $i$th hyperedge is neither empty nor equal to that part), call these faces \emph{active}. Note that $l_i$ crosses in $\AA_{i-1}$ the active faces and no other face. Further, $l'_i$ must also cross the active faces in $\AA'_{i-1}$, although $l'_i$ may intersect further faces (we will see that this cannot happen).

\begin{figure}
	\begin{center}
		\includegraphics[height=6cm]{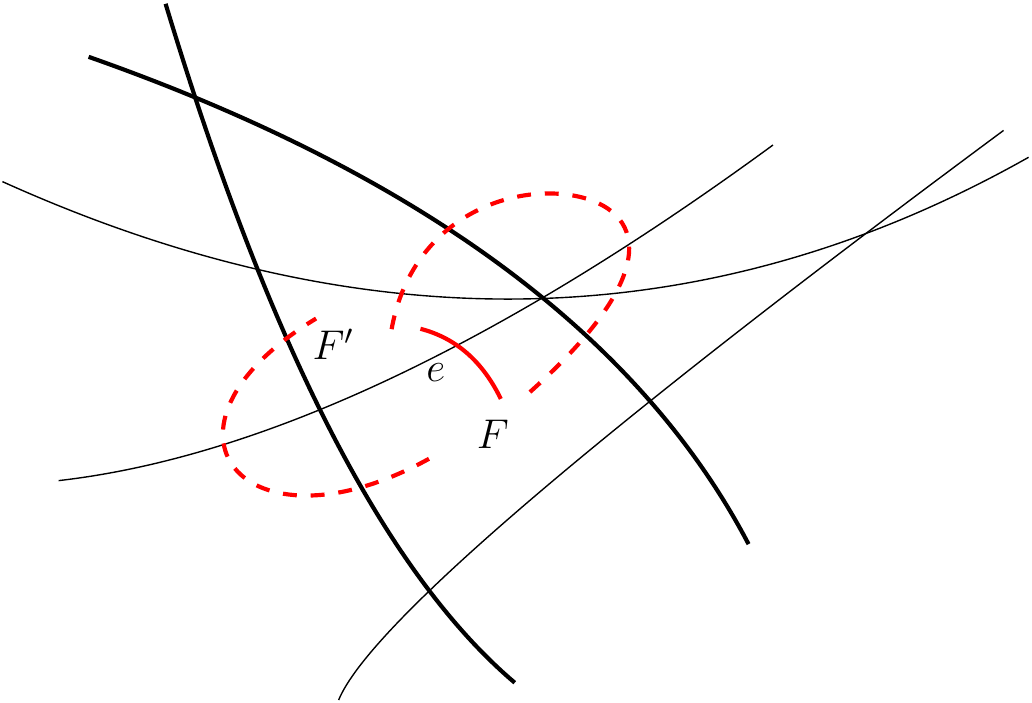}
		\caption{Proof of Theorem \ref{thm:unique}.}
		\label{fig:faces}
	\end{center}
\end{figure}

Let $F$ be an active face of the arrangement $\AA'_{i-1}$. 
We claim that topologically it is unique how $l'_i$ can cross $F$, and it is the same way as $l_i$ crosses $F$ in $\AA_{i}$. Notice that no two edges on the boundary of $F$ can be on the same pseudoline.
Let $F'$ be an active face neighboring $F$, i.e., it shares a common edge $e$ with $F$. We claim that $l_i'$ must cross $e$. Indeed, the (at most) two pseudolines which contain the edges that are consecutive with $e$ on the boundary of $F$ (see the bold pseudolines on Figure \ref{fig:faces}) separate the plane into at most $4$ parts. $F$ and $F'$ lie in the same part, thus a curve going from $F$ to $F'$ must cross both of these pseudolines an even number of times. As $l'_i$ crosses these pseudolines at most once, the only possibility is that a part of $l'_i$ connecting $F$ and $F'$ intersects both of these pseudolines zero times. The only way for this is if $l'_i$ crosses the edge $e$, as we claimed. After crossing $e$, $l'_i$ cannot cross again the pseudoline supporting $e$ and thus it cannot return to $F$. The same argument holds for $l_i$.

We call the $l_i$-order of the active faces the order in which $l_i$ crosses them. If $F$ is not the first nor the last active face in the $l_i$-order then our arguments so far show that both $l_i$ and $l'_i$ cross $F$ once and the same way. 

About the first and last face we only know that $l'_i$ leaves them once the same way as $l_i$. We are left to prove that these must be the two faces where $l'_i$ goes into infinity, the same way as $l_i$. Indeed, having determined the crossings on the boundaries of all the other active faces, we have found a crossing point for every pair of pseudolines (as in these faces $l'_i$ and $l_i$ have the same crossings and $l_i$ has no other crossings) and thus it cannot cross any other pseudoline anymore. That is, the first and last face in the $l_i$-order is also the first and last one in the order in which $l'_i$ crosses the faces, as claimed. 

Thus $\AA'_{i}$ is equivalent to $\AA_{i}$. Repeatedly applying this for $i=2,3,\dots m$ we get that the final arrangement $\AA'$ must be equivalent to $\AA$, finishing the proof.
\end{proof}

Note that in Theorem \ref{thm:unique} we need that the arrangement of the pseudolines is not loose, i.e., every pair of pseudolines intersects exactly once.

Now we can take an arbitrary non-stretchable simple pseudoline arrangement $\AA$. By Theorem \ref{thm:unique} we have an ABA-free hypergraph $\HH$ such that in every realization of $\HH$ with pseudohalfplanes the boundary pseudolines form an arrangement equivalent to $\AA$. Thus, $\HH$ cannot be realized with halfplanes as such a realization with halfplanes would also give the arrangement $\AA$, contradicting that $\AA$ was non-stretchable. 

\section{Chromatic number of pseudohalfplane hypergraphs}\label{sec:chromatic}

One may notice that our primal and dual results about pseudohalfplane hypergraphs look the same. A reason for this could be that the duals of pseudohalfplane hypergraphs are the same as pseudohalfplane hypergraphs. However, this is not the case, as shown already by a small example. This is a reason why we had to prove the primal and dual results separately. We note that the following claim is relevant already for the polychromatic coloring problems studied in \cite{abafree}, where the primal case is solved while in the dual the answer may still be the same as in the primal, but our best bounds are weaker.

\begin{claim}
	The family of pseudohalfplane hypergraphs and dual pseudohalfplane hypergraphs is not equal nor does contain one another.
\end{claim}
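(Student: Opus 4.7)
The plan is to exhibit a single small hypergraph and then to use the fact that hypergraph duality is an involution. We take $\HH$ to be the $2$-uniform hypergraph on four ordered vertices $v_1<v_2<v_3<v_4$ whose hyperedges are all six two-element subsets of $\{v_1,v_2,v_3,v_4\}$.

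First we check that $\HH$ is a pseudohalfplane hypergraph. Under the given order, the only two-element subsets forming an ABA pair are $\{v_1,v_3\}$--$\{v_2,v_4\}$ and $\{v_1,v_4\}$--$\{v_2,v_3\}$. Setting $\TT=\{\{v_1,v_2\},\{v_1,v_3\},\{v_1,v_4\},\{v_3,v_4\}\}$ and $\BB=\{\{v_2,v_3\},\{v_2,v_4\}\}$, the two problematic pairs split between $\TT$ and $\BB$, so both $\TT$ and $\{\bar B:B\in\BB\}=\{\{v_1,v_4\},\{v_1,v_3\}\}$ are ABA-free, which shows that $\HH$ is pseudohalfplane.

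Next we show that the dual hypergraph $\HH^{d}$ is \emph{not} a pseudohalfplane hypergraph. $\HH^{d}$ has six vertices (one per $2$-subset of $\{v_1,\ldots,v_4\}$) and four $3$-element hyperedges $H_1,\ldots,H_4$, where $H_i$ is the set of all $2$-subsets containing $v_i$; any two $H_i,H_j$ share exactly one vertex. In any pseudohalfplane realization of $\HH^{d}$, a linear order on the six vertices together with a split $\TT\cup\BB=\{H_1,\ldots,H_4\}$ must make every same-part pair $(H_i,H_j)$ \emph{interval-separated}, meaning the two size-two sets $H_i\setminus H_j$ and $H_j\setminus H_i$ appear as disjoint intervals of the order, one entirely before the other. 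Using the $S_4$-symmetry of $K_4$ acting on $\HH^{d}$, up to relabeling there are only two essentially distinct partitions to analyze---one $2$--$2$ split (yielding two such constraints) and one $1$--$3$ split (yielding three)---and the $4$--$0$ case is immediately handled since its constraint system contains that of the $2$--$2$ split. A direct case analysis in each case then shows that the resulting constraints force incompatible order relations on the six vertices, so no ordering realizes $\HH^{d}$. This step is the main obstacle, though it reduces to a finite, mechanical check.

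Combining the two facts proves the claim. First, $\HH$ is a pseudohalfplane hypergraph but not a dual pseudohalfplane hypergraph, for otherwise $\HH^{d}$ would also be pseudohalfplane (by involutivity of duality), contradicting the previous paragraph. Second, $\HH^{d}$ is a dual pseudohalfplane hypergraph by construction, yet is itself not pseudohalfplane. Thus the two families are distinct, and neither contains the other.
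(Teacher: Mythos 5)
Your witness is the same as the paper's---$K_4$ together with its dual, combined with the involutivity of duality---but the way you certify that $K_4$ is not a dual pseudohalfplane hypergraph is genuinely different. The paper deduces this from Theorem~\ref{thm:chrom}: dual pseudohalfplane hypergraphs are properly $3$-colorable while $\chi(K_4)=4$ (a footnote also mentions a direct computer check). You instead work on the dual side: you reduce pseudohalfplane realizability of $K_4^{d}$ to the condition that, for every pair $H_i,H_j$ placed in the same part of the top/bottom split, one of the two $2$-element difference sets $H_i\setminus H_j$, $H_j\setminus H_i$ lies entirely before the other in the vertex order. This reduction is correct---cross-part pairs impose nothing, since there the relevant difference sets $H_i\setminus \bar H_j$ and $\bar H_j\setminus H_i$ are singletons, so no ABA pattern can arise---and the case analysis you invoke does close: writing $e_{ij}$ for the vertex $H_i\cap H_j$, the $2$--$2$ split $\TT=\{H_1,H_2\}$, $\BB=\{H_3,H_4\}$ already yields the contradictory pair of constraints $\{e_{13},e_{14}\}$ versus $\{e_{23},e_{24}\}$ and $\{e_{13},e_{23}\}$ versus $\{e_{14},e_{24}\}$, the $3$--$1$ split fails in each of the eight orientation choices of its three constraints, and the $4$--$0$ system contains the $2$--$2$ one. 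However, you assert this check rather than perform it; as written that is the one substantive hole to fill, even though it is a short mechanical verification. Two smaller points: the definition of a pseudohalfplane hypergraph requires a single ABA-free family $\FF$ with $\HH\subseteq\FF\cup\bar{\FF}$, so in your positive direction you should take $\FF=\TT\cup\bar{\BB}$ and observe it is ABA-free---immediate here because $\bar{\BB}\subseteq\TT$---since the two parts being separately ABA-free is not in general sufficient; and ``disjoint intervals'' should be read simply as ``one difference set entirely precedes the other'', contiguity playing no role. As for what each route buys: the paper's argument is shorter and conceptual but leans on the much heavier Theorem~\ref{thm:chrom}, whereas yours is elementary and self-contained at the price of an explicit finite case analysis.
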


\begin{proof}
	It is easy to see that $K_4$ (the hypergraph on $4$ vertices containing all $6$ pairs as hyperedges) can be realized as a pseudohalfplane hypergraph yet it cannot be realized as a dual pseudohalfplane hypergraph \footnote{While this can be and was checked directly by a computer program, it also follows from the forthcoming Theorem \ref{thm:chrom} and thus we do not go into details about how such a program can be written.}. Thus the dual of $K_4$ can be realized as a dual pseudohalfplane hypergraph but not as a pseudohalfplane hypergraph, proving both containments. 
\end{proof}

In \cite{abafree} there was a systematic study of polychromatic problems but for some reason the chromatic number of the respective hypergraphs was not considered. Here we do this job, as it also gives us another proof of why $K_4$ (a $4$-chromatic hypergraph) cannot be realized as a dual pseudohalfplane hypergraph:

\begin{thm}\label{thm:chrom}
	The chromatic number of every ABA-free hypergraph is at most $3$, the chromatic number of every pseudohalfplane hypergraph is at most $4$, the chromatic number of every dual pseudohalfplane hypergraph is at most $3$ and these bounds are best possible.
\end{thm}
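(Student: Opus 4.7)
The plan is to prove each of the four upper bounds separately and verify tightness by small explicit examples. For the ABA-free bound, I color the unskippable vertices alternately with colors $1$ and $2$ along the linear vertex order, and give every skippable vertex color $3$. The key observation is that for any $F \in \FF$ the set of unskippable vertices inside $F$ forms a consecutive block in the linear order: if unskippable vertices $u_1 < u < u_2$ satisfied $u_1, u_2 \in F$ and $u \notin F$, then $F$ would skip $u$, contradicting that $u$ is unskippable. Combined with Lemma \ref{lem:unskippable}, this gives properness: $F$ contains at least one unskippable vertex, and if it contains at least two, the alternating pattern on their block supplies both colors $1$ and $2$; otherwise $F$ has exactly one unskippable vertex (of color $1$ or $2$) and, since $|F| \geq 2$, at least one skippable vertex of color $3$.

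For the pseudohalfplane bound, I use the cyclic order on the extremal set $C$ together with Lemma \ref{lem:hullinterval}: every pseudohalfplane meets $C$ in a cyclic arc. I color $C$ by a proper cycle-graph coloring (two colors suffice if $|C|$ is even, three if odd) using colors from $\{1, 2, 3\}$, and give every non-extremal vertex the new color $4$. By Claim \ref{claim:unskippablepshp} every hyperedge of $\HH$ contains an extremal vertex, so a hyperedge with at least two extremal vertices intersects $C$ in an arc of length at least two which contains two cycle-adjacent vertices of different colors; a hyperedge with exactly one extremal vertex and size at least $2$ must contain a non-extremal vertex of color $4$, again giving two colors.

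The dual pseudohalfplane bound of $3$ is the main obstacle. Given a pseudohalfplane $\HH = \TT \cup \BB$, I want to $3$-color its hyperedges so that every vertex $v$ contained in at least two hyperedges sees at least two colors. Since the dual of an ABA-free hypergraph is ABA-free and subhypergraphs of ABA-free hypergraphs are ABA-free, the first part of the theorem supplies proper $3$-colorings of the duals $\TT^*$ and $\BB^*$ separately. The delicate part is coordinating these into a single $3$-coloring of $\HH$: the only new constraint concerns vertices $v$ contained in exactly one topset $T_v$ and exactly one bottomset $B_v$, where we need $c(T_v) \ne c(B_v)$. I would handle this either by exploiting the flexibility in choosing the two ABA-free-dual colorings (globally permuting the three colors on one side to satisfy the mixed pairs), or alternatively by invoking Lemma \ref{lem:implicit}: in the case where three hyperedges of $\HH$ already cover $V$, those three form a natural coloring skeleton, while otherwise $\HH$ itself is a dual pseudohalfplane and one can argue by a dualized induction. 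Showing that such a coordination always succeeds with only three colors is the combinatorial heart of this part.

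Finally, for tightness: the triangle $K_3$ on three vertices with all three pairs as hyperedges is ABA-free (one checks directly that no two of its three $2$-element hyperedges create an ABA pattern) and has chromatic number $3$. Since $K_3$ is self-dual, it also realizes the dual pseudohalfplane bound. For the pseudohalfplane bound $4$, it suffices to realize $K_4$ as a pseudohalfplane hypergraph, which I would do via the explicit ABA-free family $\FF = \{\{v_1,v_2\}, \{v_2,v_3\}, \{v_3,v_4\}, \{v_1,v_3\}, \{v_1,v_2,v_3\}, \{v_2,v_3,v_4\}, V\}$ on $V = \{v_1,v_2,v_3,v_4\}$; one verifies directly that every $2$-element subset of $V$ lies in $\FF \cup \bar{\FF}$, and since $\chi(K_4) = 4$, the bound $4$ is attained.
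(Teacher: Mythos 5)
Your ABA-free bound, your pseudohalfplane bound, and your tightness examples are essentially the paper's own arguments, and they check out (the explicit ABA-free family you give for $K_4$ does work: all six pairs appear in $\FF\cup\bar\FF$ and no two members form an ABA pattern). One caveat on the pseudohalfplane part: the circular order on $C$ is a circular \emph{sequence} in which a vertex that is simultaneously a top- and a bottomvertex occupies two positions, so ``a proper cycle-graph coloring, two colors if $|C|$ is even and three if odd'' is not literally available; after identifying repeated occurrences the adjacency structure is a union of two monotone paths sharing their endpoints, and you need the (still true, greedy left-to-right) fact that three colors suffice for that graph. This is exactly the point the paper flags with ``with $3$ colors this can be done even when a vertex is both a top and a bottomvertex,'' so your argument survives with that correction.

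The dual pseudohalfplane bound, however, has a genuine gap, and you have located it yourself: everything hinges on coordinating the two independent proper $3$-colorings so that for each vertex lying in exactly one topset $T_v$ and one bottomset $B_v$ we get $c(T_v)\ne c(B_v)$, and neither of your proposed repairs is justified. A single global permutation of the three colors on one side gives only six candidates, while the mixed constraints can a priori realize many (even all nine) ordered color pairs, in which case every permutation is defeated; nothing in your argument rules this out. The route through Lemma~\ref{lem:implicit} also does not help: that lemma is a covering dichotomy, three hyperedges covering $V$ gives no proper coloring, and the other branch of the dichotomy returns you to the class you are trying to color. The paper avoids the ``color each half separately, then reconcile'' scheme entirely. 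It uses the characterization of dual pseudohalfplane hypergraphs as $\{F\Delta X: F\in\FF\}$ for an ABA-free $\FF$ and a set $X$, forms the induced ABA-free hypergraphs $\FF_1$ on $S\setminus X$ and $\FF_2$ on $X$, and builds one auxiliary graph $G$ on their unskippable vertices: a monotone path through the unskippable vertices of each part, plus a cross edge $vw$ for every $2$-element hyperedge $\{v,w\}$ with $v\in S\setminus X$, $w\in X$. An ABA argument shows the cross edges appear in reversed order along the two paths, so $G$ is outerplanar and hence $3$-colorable, and skippable vertices are then colored using Lemma~\ref{lem:assocunskippable}. This single-graph construction is the missing idea; without it (or a worked-out substitute) your third bound is not proved.
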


\begin{proof}
	The lower bounds are trivial as $K_3$ can be realized easily as an ABA-free hypergraph which is also a dual pseudohalfplane hypergraph by definition, also $K_4$ can be realized easily as a pseudohalfplane hypergraph as we can realize it already in the plane with $4$ points whose convex hull is a triangle and with appropriate $6$ halfplanes.
	
	We proceed with the upper bounds. For ABA-free hypergraphs we can alternately color the unskippable vertices with $2$ colors and use a third color for the skippable vertices. As every hyperedge intersects the unskippable vertices in an interval, it must be properly colored. In fact in \cite{abab} it was proved that ABAB-free hypergraphs (the definition see in \cite{abafree,abab}) have chromatic number at most $3$, which also implies proper $3$-colorability of ABA-free hypergraphs as every ABA-free hypergraph is also ABAB-free.
	
	For pseudohalfplane hypergraphs the upper bound follows from the more general result about the $4$-colorability of pseudo-disk wrt. pseudo-disk intersection hypergraphs \cite{Keszegh2020}. However, in our case there is a much simpler proof. Just take the extremal vertices in the order of the vertices, and for each of them if it is a topvertex (resp. bottomvertex) then we give a color different from the previous topvertex (resp. bottomvertex). With $3$ colors this can be done even when a vertex is both a top and a bottomvertex. Non-extremal vertices get the fourth color.	Then by Lemma \ref{lem:hullinterval} every hyperedge with at least two vertices either contains both an extremal and a non-extremal vertex or at least two vertices that are consecutive among the topvertices or bottomvertices, in every case the hypergraph is non-monochromatic.
	
	It remains to properly $3$-color dual pseudohalfplane hypergraphs. This is the most complicated part, the proof follows the idea of the respective earlier result of the author about $3$-coloring dual halfplane hypergraphs \cite{wcf2}. In \cite{abafree} it is proved that a hypergraph $\HH$ on an ordered set of vertices $S$ is a dual pseudohalfplane-hypergraph (that is, there exists a pseudohalfplane hypergraph whose dual is $\HH$) if and only if there exists a set $X\subset S$ and an ABA-free hypergraph $\cal F$ on $S$ such that the hyperedges of $\HH$ are the hyperedges $F\Delta X$ for every $F\in \cal F$ (where $\Delta$ denotes the symmetric difference of two sets). In the rest we assume this setup. First let $\FF_1$ (resp. $\FF_2$) be the subhypergraph of $\FF$ induced by $S\setminus X$ (resp. $X$), note that the vertex set of $\FF_1$ (resp. $\FF_2$) is $S\setminus X$ (resp. $X$). Every hyperedge $H=F\Delta X$ with at least $2$ vertices intersects $S\setminus X$ or $X$ in at least $2$ vertices or both of them in exactly one vertex. 
	
	We define a graph $G$ on $S$. The vertex set of $G$ is the set $U$ of the unskippable vertices of $\FF_1$ and $\FF_2$. First we connect two vertices  $v\in (S\setminus X)\cap U$ and $w\in X\cap U$ by an edge if there exists a hyperedge $H=\{v,w\}$ in $\HH$. We connect two vertices $v,w\in (S\setminus X)\cap U$ if they are consecutive (in the order of unskippable vertices) unskippable vertices of $\FF_1$. Finally, we connect two vertices $v,w\in X\cap U$ if they are consecutive unskippable vertices of $\FF_2$. We claim that this graph is outerplanar and thus $3$-colorable. The second and third set of edges form two paths that follow the order of the vertices. It is enough prove that in the first set of edges the connected vertices are in reversed order along the paths, i.e., there are no two edges $v_1w_1$ and $v_2w_2$ with $v_1,v_2\in S\setminus X$ and $w_1,w_2\in X$ such that $v_1<v_2$ and $w_1<w_2$. Showing this it follows that if we reverse one of the paths, the first set of edges forms a caterpillar between the two paths, thus the three parts together obviously form an outerplanar graph. Thus assume on the contrary that there are such two edges. This implies that there is a hyperedge $F_1=\{v_1\}\cup X\setminus\{w_1\}$ and $F_2=\{v_2\}\cup X\setminus\{w_2\}$ in $\FF$ (these are the hyperedges of $\FF$ with $\{v_1,w_1\}=F_1\Delta X$ and $\{v_2,w_2\}=F_2\Delta X$). If $w_1<v_1$ then these two hyperedges form an ABA-sequence on $w_1,v_1,w_2$, a contradiction. Otherwise, if $v_1<w_1$ then these two hyperedges form an ABA-sequence on $v_1,w_1,w_2$, again a contradiction. 
	
	Finally, we color the remaining vertices $S\setminus U$. By Lemma \ref{lem:assocunskippable} for each skippable vertex $v$ of $\FF_1$ there are $2$ unskippable vertices of $\FF_1$ such that every hyperedge that contains $v$ contains at least one of these. Thus color $v$ with a color different from the colors of these two unskippable vertices. We color similarly the skippable vertices of $\FF_2$. 
	
	We claim that the color we get is a proper $3$-coloring of $\HH$. First, if $H\in \HH$ contains a vertex from $S\setminus U$ then it is good by the last step of our coloring process. Otherwise, if $H$ contains at least two unksippable vertices of $\FF_1$ or of $\FF_2$ then we are done as then it contains two consecutive unskippable vertices in one of them, which get different colors as they are consecutive on one of the two paths we added to $G$. Finally, if $H$ contains exactly one unskippable vertex of $\FF_1$ and one of $\FF_2$ then these get different color as they were connected in $G$ in the first set of edges we added to $G$.
\end{proof}

Determining the maximal chromatic number of pseudohemisphere hypergraphs is an interesting open problem we leave open.

\section{Discussion}\label{sec:discussion}

We generalized several discrete Helly-type theorems about points and halfplanes to points and pseudohalfplanes, phrased equivalently as results about vertices and hyperedges of pseudohalfplane hypergraphs. While we have proved all possible results about ABA-free hypergraphs and pseudohalfplane hypergraphs, for pseudohemisphere hypergraphs we have shown only a result of type $4\rightarrow 2$, e.g., we do not know if a result of type $3\rightarrow l$ is true for some integer $l$.

Our discrete Helly-type results can be regarded as discrete variants of a Hadwiger Debrunner $(p,q)$-problem (see, e.g., the survey by Eckhoff \cite{Eckhoff2003}) in the special case when $p=q$. It would be interesting to consider also discrete Hadwiger Debrunner-type problems with $p\ne q$.

Pseudohalfplane hypergraphs are based on ABA-free hypergraphs. Similar to them, in \cite{abafree} ABAB-free (and ABABA-free etc.) hypergraphs were defined and in \cite{abab} it was shown that they are equivalent to hypergraphs defined on a point set by pseudodisks all containing the origin. Are there discrete Helly theorems about ABAB-free hypergraphs, or equivalently, about pseudodisks all containing the origin? This is especially interesting in the light of Theorem \ref{thm:convpseudodisks} about convex pseudodisks, which was not possible to generalize within the context of pseudohalfplane hypergraphs. On one hand, one important property of pseudohalfplane hypergraphs used in \cite{abafree}, that they always admit shallow hitting sets (for definitions see \cite{abafree}), does not always hold for ABAB-free hypergraphs. On the other hand, some other related positive results were proved in \cite{abab}.
\subsubsection*{Acknowledgement}

The author is grateful to D. P\'alv\"olgyi for the many discussions about these results and for the anonymous reviewers for their insightful comments, in particular for bringing to our attention the paper of Halman \cite{halman}.

\bibliographystyle{plainurl}
\bibliography{psdisk}

\begin{thebibliography}{10}

\bibitem{cogezoo}
Geometric hypergraph zoo.
\newblock URL: \url{http://coge.elte.hu/cogezoo.html}.

\bibitem{abab}
Eyal Ackerman, Bal{\'{a}}zs Keszegh, and D{\"{o}}m{\"{o}}t{\"{o}}r
  P{\'{a}}lv{\"{o}}lgyi.
\newblock Coloring hypergraphs defined by stabbed pseudo-disks and {ABAB}-free
  hypergraphs.
\newblock {\em SIAM Journal on Discrete Mathematics}, 34(4):2250--2269, 2020.

\bibitem{dp}
G{\'{a}}bor Dam{\'{a}}sdi and D{\"{o}}m{\"{o}}t{\"{o}}r P{\'{a}}lv{\"{o}}lgyi.
\newblock Realizing an m-uniform four-chromatic hypergraph with disks.
\newblock {\em CoRR}, abs/2011.12187, 2020.

\bibitem{doignon}
Jean-Paul Doignon.
\newblock {Convexity in cristallographical lattices}.
\newblock {\em Journal of Geometry}, 3(1):71--85, 1973.

\bibitem{Eckhoff2003}
J{\"u}rgen Eckhoff.
\newblock {\em A Survey of the Hadwiger-Debrunner (p, q)-problem}, pages
  347--377.
\newblock Springer Berlin Heidelberg, Berlin, Heidelberg, 2003.

\bibitem{handbook}
Jacob~E. Goodman, Joseph O'Rourke, and Csaba~D. T\'oth, editors.
\newblock {\em Handbook of Discrete and Computational Geometry}.
\newblock CRC Press, Inc., USA, 2017.

\bibitem{halman}
Nir Halman.
\newblock {Discrete and Lexicographic Helly-Type Theorems}.
\newblock {\em Discrete {\&} Computational Geometry}, 39(4):690--719, 2008.

\bibitem{Helly1923}
Eduard Helly.
\newblock {\"Uber Mengen konvexer K\"orper mit gemeinschaftlichen Punkte.}
\newblock {\em Jahresbericht der Deutschen Mathematiker-Vereinigung},
  32:175--176, 1923.

\bibitem{jjr}
Frederik~Brinck Jensen, Aadi Joshi, and Saurabh Ray.
\newblock {Discrete Helly type theorems}.
\newblock In {\em Proceedings of the 30th Annual Canadian Conference on
  Computational Geometry, {CCCG} 2020, August 5-7, 2020, University of
  Saskatchewan, Saskatoon, Saskatchewan, Canada}, pages 332--335, 2020.

\bibitem{wcf2}
Bal{\'{a}}zs Keszegh.
\newblock Coloring half-planes and bottomless rectangles.
\newblock {\em Computational geometry}, 45(9):495--507, 2012.

\bibitem{Keszegh2020}
Bal\'azs Keszegh.
\newblock Coloring intersection hypergraphs of pseudo-disks.
\newblock {\em Discrete \& Computational Geometry}, 64(3):942--964, 2020.

\bibitem{abafree}
Bal{\'a}zs Keszegh and D{\"o}m{\"o}t{\"o}r P{\'a}lv{\"o}lgyi.
\newblock {An abstract approach to polychromatic coloring: shallow hitting sets
  in ABA-free hypergraphs and pseudohalfplanes}.
\newblock {\em J. Comput. Geom.}, 10:1--26, 2019.

\bibitem{smorodinsky-yuditsky}
Shakhar Smorodinsky and Yelena Yuditsky.
\newblock Polychromatic coloring for half-planes.
\newblock {\em Journal of Combinatorial Theory, Series A}, 119(1):146 -- 154,
  2012.

\end{thebibliography}

\end{document}